\newcommand{\lyxaddress}[1]{
\par {\raggedright #1
\vspace{1.4em}
\noindent\par}
}
\theoremstyle{plain}
\newtheorem{thm}{Theorem}
  \theoremstyle{definition}
  \newtheorem{defn}[thm]{Definition}
  \theoremstyle{plain}
  \newtheorem{prop}[thm]{Proposition}
  \theoremstyle{remark}
  \newtheorem{rem}[thm]{Remark}
 \theoremstyle{definition}
  \newtheorem{example}[thm]{Example}
\DeclareMathOperator{\interior}{int}
\begin{document}

\title{Coherent frequentism}

\author{David R. Bickel}

\maketitle

\lyxaddress{Ottawa Institute of Systems Biology; Department of Biochemistry,
Microbiology, and Immunology; Department of Mathematics and Statistics
\\
University of Ottawa; 451 Smyth Road; Ottawa, Ontario, K1H 8M5}

\lyxaddress{+01 (613) 562-5800, ext. 8670; dbickel@uottawa.ca}
\begin{abstract}
By representing the range of fair betting odds according to a pair
of confidence set estimators, dual probability measures on parameter
space called frequentist posteriors secure the coherence of subjective
inference without any prior distribution. The closure of the set of
expected losses corresponding to the dual frequentist posteriors constrains
decisions without arbitrarily forcing optimization under all circumstances.
This decision theory reduces to those that maximize expected utility
when the pair of frequentist posteriors is induced by an exact or
approximate confidence set estimator or when an automatic reduction
rule is applied to the pair. In such cases, the resulting frequentist
posterior is coherent in the sense that, as a probability distribution
of the parameter of interest, it satisfies the axioms of the decision-theoretic
and logic-theoretic systems typically cited in support of the Bayesian
posterior. Unlike the \emph{p}-value, the confidence level of an interval
hypothesis derived from such a measure is suitable as an estimator
of the indicator of hypothesis truth since it converges in sample-space
probability to 1 if the hypothesis is true or to 0 otherwise under
general conditions.
\end{abstract}
Keywords: attained confidence level; coherence; coherent prevision;
confidence distribution; decision theory; minimum expected loss; fiducial
inference; foundations of statistics; imprecise probability; maximum
utility; observed confidence level; problem of regions; significance
testing; upper and lower probability; utility maximization

\newpage{}

\section{\label{sec:Introduction}Introduction}

\subsection{\label{sub:Background}Motivation}

A well known mistake in the interpretation of an observed confidence
interval confuses \emph{confidence }as a level of certainty with {}``confidence''
as the \emph{coverage rate}, the almost-sure limiting rate at which
a confidence interval would cover a parameter value over repeated
sampling from the same population. This results in using the stated
confidence level, say 95\%, as if it were a probability that the parameter
value lies in the particular confidence interval that corresponds
to the observed sample. A practical solution that does not sacrifice
the 95\% coverage rate is to report a confidence interval that matches
a 95\% \emph{credibility interval} computable from Bayes's formula
given some \emph{matching prior} distribution \citep{RefWorks:1222}.
In addition to canceling the error in interpretation, such matching
enables the statistician to leverage the flexibility of the Bayesian
approach in making jointly consistent inferences, involving, for example,
the probability that the parameter lies in any given region of the
parameter space, on the basis of a posterior distribution firmly anchored
to valid frequentist coverage rates. Priors yielding exact matching
of predictive probabilities are available for many models, including
location models and certain location-scale models \citep{RefWorks:1155,RefWorks:1154}.
Although exact matching of fixed-parameter coverage rates is limited
to location models \citep{RefWorks:1158,RefWorks:61}, priors yielding
asymptotic matching have been identified for other models, e.g., a
hierarchical normal model \citep{RefWorks:1155}. For mixture models,
all priors that achieve matching to second order necessarily depend
on the data but asymptotically converge to fixed priors \citep{RefWorks:1234}.
Data-based priors can also yield second-order matching with insensitivity
to the sampling distribution \citep{RefWorks:1236}. Agreeably, \citet{FraserReidYi2008}
suggested a data-dependent prior for approximating the likelihood
function integrated over the nuisance parameters to attain accurate
matching between Bayesian probabilities and coverage rates. These
advances approach the vision of building an objective Bayesianism,
defined as a {}``universal recipe for applying Bayes theorem in the
absence of prior information'' \citep{RefWorks:193}. 

Viewed from another angle, the fact that close matching can require
resorting to priors that change with each new observation, cracking
the foundations of Bayesian inference, raises the question of whether
many of the goals motivating the search for an objective posterior
can be achieved apart from Bayes's formula. It will in fact be seen
that such a probability distribution lies dormant in nested confidence
intervals, securing the above benefits of interpretation and coherence
without matching priors, provided that the confidence intervals are
constructed to yield reasonable inferences about the value of the
parameter for each sample from the available information. 

Unless the confidence intervals are conservative by construction,
the condition of adequately incorporating any relevant information
is usually satisfied in practice since confidence intervals are most
appropriate when information about the parameter value is either largely
absent or included in the interval estimation procedure, as it is
in random-effects modeling and various other frequentist shrinkage
methods. Likewise, confidence intervals known to lead to pathologies
tend to be avoided. (Pathological confidence intervals often emphasized
in support of credibility intervals include formally valid confidence
intervals that lie outside the appropriate parameter space \citep{RefWorks:156}
and those that can fail to ascribe 100\% confidence to an interval
deduced from the data to contain the true value \citep{RefWorks:180}.)
A game-theoretic framework makes the requirement more precise: for
the 95\% confidence interval to give a 95\% degree of certainty in
the single case and to support coherent inferences, it must be generated
to ensure that, on the available information, 19:1 are approximately
fair betting odds that the parameter lies in the observed interval.
This condition rules out the use of highly conservative intervals,
pathological intervals, and intervals that fail to reflect substantial
pertinent information. In relying on an observed confidence interval
to that extent, the decision maker ignores the presence of any recognizable
subsets \citep{RefWorks:1137}, not only slightly conservative subsets,
as in the tradition of controlling the rate of Type I errors \citet{ConditionallyAcceptableRecenteredSetEstimators1987},
but also slightly anti-conservative subsets. Given the ubiquity of
recognizable subsets \citep{RefWorks:1291,RefWorks:1239}, this strategy
uses pre-data confidence as an approximation to post-data confidence
in the sense in which expected Fisher information approximates observed
Fisher information \citep{RefWorks:1321}, aiming not at exact inference
but at a pragmatic use of the limited resources available for any
particular data analysis. Certain situations may instead call for
careful applications of conditional inference \citep{GoutisCasella1995b,Sundberg2003299,FraserAncillaries2004a}
for basing decisions more directly on the data actually observed.

\subsection{Direct inference and attained confidence}

The above betting interpretation of a frequentist posterior will be
generalized in a framework of decision to formalize, control, and
extend the common practice of equating the level of certainty that
a parameter lies in an observed confidence interval with the interval
estimator's rate of coverage over repeated sampling.

Many who fully understand that the 95\% confidence interval is defined
to achieve a 95\% coverage rate over repeated sampling will for that
reason often be substantially more certain that the true value of
the parameter lies in an observed 99\% confidence interval than that
it lies in a 50\% confidence interval computed from the same data
\citep[pp. 11-12]{RefWorks:999,RefWorks:170}. This \emph{direct inference},
reasoning from the frequency of individuals of a population that have
a certain property to a level of certainty about whether a particular
sample from the population, is a notable feature of inductive logic
\citep[e.g.,  ][]{RefWorks:999,Jaeger2005b} and often proves effective
in everyday decisions. Knowing that the new cars of a certain model
and year have speedometer readings within 1 mile per hour (mph) of
the actual speed in 99.5\% of cases, most drivers will, when betting
on whether they comply with speed limits, have a high level of certainty
that the speedometer readings of their particular new cars of that
model and year accurately report their current speed in the absence
of other relevant information. (Such information might include a reading
of 10 mph when the car is stationary, which would indicate a defect
in the instrument at hand.) If the above betting interpretation of
the confidence level holds for an interval given by some predetermined
level of confidence, then coherence requires that it hold equally
for a level of confidence given by some predetermined hypothesis.

Fisher's fiducial argument also employed direct inference (\citealp{FisherLogicalInversion1945};
\citealp[pp. 34-36, 57-58]{RefWorks:985}; \citealp[Chapter 9]{RefWorks:305};
\citealp{RefWorks:1179}). The present framework departs from his
in its applicability to inexact confidence sets, in the closer proximity
of its probabilities to repeated-sampling rates of covering vector
parameters, in its toleration of reference classes with relevant subsets,
and in its theory of decision. Since the second and third departures
are shared with recent methods of computing the confidence probability
of an arbitrary hypothesis (§\ref{sub:Hypothesis}), the main contribution
of this paper is the general framework of inference that both motivates
such methods given an exact confidence set and extends them for use
with approximate, valid, and nonconservative set estimators and for
coherent decision making, including prediction and point estimation. 

This framework draws from the theory of coherent upper and lower probabilities
for the cases in which no exact confidence set with the desired properties
is available. To allow indecision in light of inconclusive evidence,
these non-additive probabilities have been formulated for lotteries
in which the agent may either place a bet or refrain from betting
or, equivalently, in which the casino posts different odds to be used
depending on whether a gambler bets for or against a hypothesis. Confidence
decision theory will be formulated for this scenario by setting an
agent's prices of buying and selling a gamble on the hypothesis that
a parameter $\theta$ is in some set $\Theta^{\prime}\in\Theta$ according
to the confidence levels of a valid set estimate and a nonconservative
confidence set estimate that coincide with $\Theta^{\prime}.$ As
a result, the hypothesis has an interval of confidence levels rather
than a single confidence level. Equating the buying and selling prices
reduces the upper and lower probability functions to a single frequentist
posterior, a probability measure on parameter space $\Theta,$ and
thus reduces the interval to a point.

\subsection{Overview}

This subsection outlines the organization of the remainder of the
paper while offering a brief summary. 

After preliminary concepts are defined (§\ref{sub:Preliminaries}),
Section \ref{sub:Confidence-measures-etc} presents the new framework
for confidence-based inference and decision. The family of probability
measures (frequentist posteriors) used in inference and decision can
be stated in terms of coherent lower and upper probabilities and is
thus completely self-consistent according to a widely accepted account
of coherence derived from ideas of Bruno de Finetti (§\ref{sub:Coherence}).
This lays a foundation for decisions and for flexible inference about
the truth of hypotheses without invoking the likelihood principle
(§§\ref{sub:Decisions-given-confidence}, \ref{sub:Likelihood}).
The framework is compared to other versions of frequentist coherence
based on upper and lower probabilities in Section \ref{sub:Likelihood}.

While reporting an interval level of confidence in a hypothesis has
the advantage of honestly communicating the insufficiency of the data
to determine a single confidence level, such intervals are less useful
in situations requiring the automation of decisions. Under such circumstances,
the family of frequentist posteriors can be reduced to a single frequentist
posterior by the use of exact or approximate confidence sets or by
an automatic reduction rule (§\ref{sub:Collapse}). For a single frequentist
posterior, confidence decision theory is equivalent to the minimization
of expected posterior loss (§\ref{sub:Decision-degenerate}). As a
probability measure on hypothesis space, the resulting frequentist
posterior satisfies the same coherence axioms as the Bayesian posterior
whether or not it is compatible with any prior distribution (§\ref{sub:The-Bayesian-framework}).
The important special case of a scalar parameter of interest provides
an arena for contrasting frequentist posterior probabilities and \emph{p}-values
(§\ref{sub:Scalar-parameter}).

The confidence framework provides direct and simple approaches to
common problems of data analysis, as will be illustrated by example
in Sections \ref{sub:Decision-degenerate} and \ref{sub:Confidence-vs-p}.
Examples include reporting probabilistic levels of confidence of the
interval, two-sided null hypotheses required in bioequivalance testing,
assigning confidence to a complex region, and assessing practical
or scientific significance. Posterior point estimates and predictions
that account for parameter uncertainty are also available without
relinquishing the objectivity of the Neyman-Pearson framework.

Section \ref{sec:Discussion} concludes the paper by highlighting
the main properties of the proposed framework.

\section{\label{sec:General-framework}Confidence decision theory}

\subsection{\label{sub:Preliminaries}Preliminaries}

\subsubsection{Basic notation}

The values of $x\wedge y$ and $x\vee y$ are respectively the minimum
and maximum of $x$ and $y.$ The symbols $\subseteq$ and $\subset$
respectively signify subset and proper subset. $1_{\Theta^{\prime}}:\Theta\rightarrow\left\{ 0,1\right\} $
is the usual indicator function: $1_{\Theta^{\prime}}\left(\theta\right)$
is 1 if $\theta\in\Theta^{\prime}$ or 0 if $\theta\notin\Theta^{\prime}$.

Angular brackets rather than parentheses signal numeric tuples. For
example, if $x$ and $y$ are numbers, then $\left\langle x,y\right\rangle $
denotes an ordered pair, whereas $\left(x,y\right)$ denotes the open
interval $\left\{ z:x<z<y\right\} .$ 

Given a probability space $\left(\Omega,\Sigma,P_{\xi}\right)$ indexed
by the vector parameter $\xi\in\Xi\subseteq\mathbb{R}^{d},$ consider
the random quantity $X$ of distribution $P_{\xi}$ and with a realization
$x$ in some sample set $\Omega\subseteq\mathbb{R}^{n}$. Without
loss of generality, partition the full parameter $\xi$ into an interest
parameter $\theta\in\Theta$ and, unless $\theta=\xi,$ a nuisance
parameter $\gamma\in\Gamma,$ such that $\xi\in\Theta\times\Gamma$
and $P_{\theta,\gamma}=P_{\xi}.$

Except where otherwise noted, every probability distribution is a
standard (Kolmogorov) probability measure. An \emph{incomplete }probability
measure is a standard, additive measure with total mass less than
or equal to 1.

Let $\left(\Theta,\mathcal{A}\right)$ represent a measurable space
and $\mathcal{B}\left(\left[0,1\right]\right)$ the Borel $\sigma$-field
of $\left[0,1\right]$. The complement and power set of $\Theta^{\prime}$
are $\bar{\Theta^{\prime}}$ and $2^{\Theta^{\prime}},$ respectively.
The $\sigma$-field induced by $\mathcal{C}$ is $\sigma\left(\mathcal{C}\right)$.

\subsubsection{Metameasure and metaprobability spaces}

The following slight extension of probability theory is facilitates
a clear and precise presentation of the present framework. To avoid
unnecessary confusion between single-valued probability and the specific
type of multi-valued probability required, the former will be called
{}``probability'' in agreement with common usage, and the latter
will be called {}``metaprobability,'' a term defined below. 
\begin{defn}
Given a measurable space $\left(\Theta,\mathcal{A}\right)$ and a
\emph{metameasure space}, the triple $\mathcal{M}=\left(\Theta,\mathcal{A},\mathfrak{P}\right)$
with a family $\mathfrak{P}$ of measures, the \emph{metameasure }$\mathcal{P}$
of $\mathcal{M}$ is a function $\mathcal{P}$ from $\mathcal{A}$
to the set of all closed intervals of $\left[0,\infty\right)$ such
that $\mathcal{P}\left(A\right)$ is the closure of \[
\left\{ P\left(A\right):P\in\mathfrak{P}\right\} \]
for each $A\in\mathcal{A}.$ The metameasure $\mathcal{P}$ is said
to be \emph{degenerate} if $\left|\mathcal{\mathfrak{P}}\right|=1$
or \emph{nondegenerate} if $\left|\mathcal{\mathfrak{P}}\right|>1.$
\end{defn}

\begin{defn}
\label{def:expectation-interval}The metameasure $\mathcal{P}$ of
a metameasure space $\mathcal{M}=\left(\Theta,\mathcal{A},\mathfrak{P}\right)$
is a \emph{probability metameasure} if each member of $\mathfrak{P}$
is a probability measure. Then $\mathcal{M}$ is called a \emph{metaprobability
space}, and $\mathcal{P}\left(A\right)$ is called the \emph{metaprobability}
of \emph{event} $A$ for all $A\in\mathcal{A}.$ The \emph{expectation
interval} or \emph{expected interval} $\mathsf{\mathfrak{E}}\left(L\right)$
of a measurable map $L:\mathcal{A}\rightarrow\mathbb{R}^{1}$ with
respect to a probability metameasure $\mathcal{P}$ on $\mathcal{M}$
is the closure of \[
\left\{ \int L\left(\vartheta\right)dP\left(\vartheta\right):P\in\mathfrak{P}\right\} .\]

\end{defn}
In words, the expectation interval of a random quantity with respect
to a probability metameasure is the smallest closed interval containing
the expectation values of the random quantity with respect to the
probability measures of the metaprobability space.

\subsection{\label{sub:Confidence-measures-etc}Confidence measures and metameasures}

Particular types of confidence sets form the basis of the metameasure
on which confidence decision theory rests.
\begin{defn}
A \emph{set estimator} $\hat{\Theta}$ for $\theta$ is a function
defined on $\Omega\times\left[0,1\right].$ A set estimator is called
\emph{valid} if its coverage rate over repeated sampling is at least
as great as $\rho,$ the nominal confidence coefficient: \[
P_{\xi}\left(\theta\in\hat{\Theta}\left(X;\rho\right)\right)\ge\rho\]
for all $\xi\in\Xi$ and $\rho\in\left[0,1\right].$ A set estimator
is called \emph{nonconservative} if its coverage rate over repeated
sampling is at no greater than the nominal confidence coefficient:
\[
P_{\xi}\left(\theta\in\hat{\Theta}\left(X;\rho\right)\right)\le\rho\]
for all $\xi\in\Xi$ and $\rho\in\left[0,1\right].$ A set estimator
that is both valid\emph{ }and nonconservative\emph{ }is called \emph{exact}.
For some set $\mathcal{C}$ of connected subsets of $\mathcal{C},$
a set estimator is called \emph{nested }if it is a function $\hat{\Theta}:\Omega\times\left[0,1\right]\rightarrow\mathcal{C}$
such that such that, for all $x\in\Omega,$ there is a $\mathcal{C}\left(x\right)\subseteq\mathcal{C}$
such that $\hat{\Theta}\left(x;\bullet\right):\left[0,1\right]\rightarrow\mathcal{C}\left(x\right)$
is bijective, $\hat{\Theta}\left(x;0\right)=\emptyset,$ $\hat{\Theta}\left(x;1\right)=\Theta,$
and \begin{equation}
\hat{\Theta}\left(x;\rho_{1}\right)\subseteq\hat{\Theta}\left(x;\rho_{2}\right)\label{eq:nesting}\end{equation}
for all $0\le\rho_{1}\le\rho_{2}\le1.$ Two nested set estimators
$\hat{\Theta}_{1}:\Omega\times\left[0,1\right]\rightarrow\mathcal{C}$
and $\hat{\Theta}_{2}:\Omega\times\left[0,1\right]\rightarrow\mathcal{C}$
are \emph{dual} if the ranges $\mathcal{C}_{1}\left(x\right)$ and
$\mathcal{C}_{2}\left(x\right)$ of $\hat{\Theta}_{1}\left(x;\bullet\right)$
and $\hat{\Theta}_{2}\left(x;\bullet\right)$ induce the same $\sigma$-field,
i.e., $\sigma\left(\mathcal{C}_{1}\left(x\right)\right)=\sigma\left(\mathcal{C}_{2}\left(x\right)\right),$
for each $x\in\Omega.$
\end{defn}
The desired metameasure will be constructed from two confidence measures
in turn constructed from dual nested set estimators.
\begin{defn}
Let $\hat{\Theta}:\Omega\times\left[0,1\right]\rightarrow\mathcal{C}$
denote a nested set estimator and $\mathcal{A}^{x}$ the $\sigma$-field
induced by $\mathcal{C}\left(x\right),$ the range of $\hat{\Theta}\left(x;\bullet\right)$
for each $x\in\Omega.$ Then, for all $x\in\Omega,$ \emph{$\hat{\Theta}$
induces} the probability space $\left(\Theta,\mathcal{A}^{x},P^{x}\right)$
and the \emph{confidence measure} or \emph{frequentist posterior}
$P^{x},$ the probability measure on $\mathcal{A}^{x}$ such that\begin{equation}
\Theta^{\prime}\in\mathcal{C}\left(x\right)\implies\Theta^{\prime}=\hat{\Theta}\left(x;P^{x}\left(\Theta^{\prime}\right)\right).\label{eq:certainty-measure}\end{equation}
The probability $P^{x}\left(\Theta^{\prime}\right)$ is the \emph{confidence
level} of the hypothesis that $\theta\in\Theta^{\prime}.$ If $\hat{\Theta}$
is valid, nonconservative, or exact, then $P^{x}$ and $P^{x}\left(\Theta^{\prime}\right)$
are likewise called \emph{valid}, \emph{nonconservative}, or \emph{exact},
respectively.
\end{defn}
The next result provides the confidence level of any hypothesis that
$\theta\in\Theta^{\prime}\in\mathcal{A}^{x}$ as the sum of confidence
levels given more directly by equation (\ref{eq:certainty-measure}).
\begin{prop}
For each $x\in\Omega,$ let $\left(\Theta,\mathcal{A}^{x},P^{x}\right)$
be the confidence measure induced by the nested set estimator $\hat{\Theta}:\Omega\times\left[0,1\right]\rightarrow\mathcal{C},$
and let $\mathcal{C}\left(x\right)$ be the range of $\hat{\Theta}\left(x;\bullet\right).$
For some $K\in\left\{ 1,2,\dots\right\} ,$ let $\Theta^{\prime}=\cup_{k=1}^{K}\Theta_{k}^{\prime},$
where $\Theta_{k}^{\prime}\in\mathcal{A}^{x}$ and \[
i\ne j\implies\Theta_{i}^{\prime}\cap\Theta_{j}^{\prime}=\emptyset.\]
Then\begin{equation}
P^{x}\left(\Theta^{\prime}\right)=\sum_{k=1}^{K}\left(P^{x}\left(\Theta_{k}^{+}\right)-P^{x}\left(\Theta_{k}^{-}\right)\right),\label{eq:sum-of-interval-P}\end{equation}
where\[
\Theta_{k}^{+}=\arg\inf_{\Theta^{\prime\prime}\in\mathcal{C}\left(x\right),\Theta^{\prime}\subseteq\Theta^{\prime\prime}}\left|\Theta^{\prime\prime}\right|\]
and $\Theta_{k}^{-}=\Theta_{k}^{+}\backslash\Theta_{k}^{\prime}$
for all $k\in\left\{ 1,2,\dots,K\right\} .$~\end{prop}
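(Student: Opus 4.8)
The plan is to obtain (\ref{eq:sum-of-interval-P}) from two uses of the finite additivity of $P^{x}$ together with the chain structure of $\mathcal{C}\left(x\right)$. First I would record the elementary consequences of the definitions: since $\hat{\Theta}\left(x;\bullet\right)$ is a bijection onto $\mathcal{C}\left(x\right)$ satisfying the nesting (\ref{eq:nesting}), it is an order isomorphism from $\left(\left[0,1\right],\le\right)$ onto $\left(\mathcal{C}\left(x\right),\subseteq\right)$, so $\left(\mathcal{C}\left(x\right),\subseteq\right)$ is a chain; and (\ref{eq:certainty-measure}) gives $P^{x}\left(\hat{\Theta}\left(x;\rho\right)\right)=\rho$ for every $\rho\in\left[0,1\right]$, so $P^{x}$ is nondecreasing along the chain with $P^{x}\left(\emptyset\right)=0$ and $P^{x}\left(\Theta\right)=1$. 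Since $\Theta'=\cup_{k=1}^{K}\Theta_{k}'$ is a finite disjoint union of members of $\mathcal{A}^{x}$, hence itself in $\mathcal{A}^{x}$, and $P^{x}$ is a probability measure on $\mathcal{A}^{x}$, finite additivity gives $P^{x}\left(\Theta'\right)=\sum_{k=1}^{K}P^{x}\left(\Theta_{k}'\right)$. It therefore suffices to establish, for each $k$, the single-set identity $P^{x}\left(\Theta_{k}'\right)=P^{x}\left(\Theta_{k}^{+}\right)-P^{x}\left(\Theta_{k}^{-}\right)$.

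For that identity I would argue directly from the definition of $\Theta_{k}^{+}$ as the $\subseteq$-smallest member of $\mathcal{C}\left(x\right)$ containing $\Theta_{k}'$. Then $\Theta_{k}'\subseteq\Theta_{k}^{+}$ with $\Theta_{k}^{+}\in\mathcal{C}\left(x\right)\subseteq\mathcal{A}^{x}$, so $\Theta_{k}^{-}=\Theta_{k}^{+}\backslash\Theta_{k}'\in\mathcal{A}^{x}$ because $\mathcal{A}^{x}$ is closed under relative complements, and $\Theta_{k}^{+}$ is the disjoint union of $\Theta_{k}'$ and $\Theta_{k}^{-}$. Applying additivity of $P^{x}$ to this disjoint union yields $P^{x}\left(\Theta_{k}^{+}\right)=P^{x}\left(\Theta_{k}'\right)+P^{x}\left(\Theta_{k}^{-}\right)$, that is, $P^{x}\left(\Theta_{k}'\right)=P^{x}\left(\Theta_{k}^{+}\right)-P^{x}\left(\Theta_{k}^{-}\right)$; substituting into the previous display gives (\ref{eq:sum-of-interval-P}). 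Here $P^{x}\left(\Theta_{k}^{+}\right)$ is read off directly from (\ref{eq:certainty-measure}) because $\Theta_{k}^{+}\in\mathcal{C}\left(x\right)$, and likewise for $P^{x}\left(\Theta_{k}^{-}\right)$ whenever $\Theta_{k}^{-}$ also lies in $\mathcal{C}\left(x\right)$.

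The only step I expect to need real care is the well-definedness of $\Theta_{k}^{+}$, i.e. that the infimum defining it is attained in $\mathcal{C}\left(x\right)$. Under the order isomorphism above, $\left\{ \Theta''\in\mathcal{C}\left(x\right):\Theta_{k}'\subseteq\Theta''\right\}$ corresponds to an up-set of $\left[0,1\right]$ with infimum $\rho^{+}$, say, and the $\arg\inf$ exists exactly when that up-set contains $\rho^{+}$; this can fail if $\rho\mapsto\hat{\Theta}\left(x;\rho\right)$ is not left-continuous at $\rho^{+}$, i.e. if some point of $\Theta_{k}'$ lies in $\cap_{\rho>\rho^{+}}\hat{\Theta}\left(x;\rho\right)$ but not in $\hat{\Theta}\left(x;\rho^{+}\right)$. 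I would dispose of this either by a mild regularity assumption on the range of $\hat{\Theta}\left(x;\bullet\right)$ --- closure of $\mathcal{C}\left(x\right)$ under the intersections of its sub-chains, which is automatic when $\mathcal{C}\left(x\right)$ consists of compact intervals with endpoints continuous in $\rho$ --- or, absent that, by observing that the displayed single-set identity in fact holds for \emph{every} $\Theta_{k}^{+}\in\mathcal{A}^{x}$ with $\Theta_{k}'\subseteq\Theta_{k}^{+}$, since the computation $P^{x}\left(\Theta_{k}^{+}\right)-P^{x}\left(\Theta_{k}^{+}\backslash\Theta_{k}'\right)=P^{x}\left(\Theta_{k}'\right)$ uses only $\Theta_{k}'\subseteq\Theta_{k}^{+}$ and additivity; one may then take $\Theta_{k}^{+}$ to be the intersection of all chain elements containing $\Theta_{k}'$, formed inside $\mathcal{A}^{x}$ if necessary, and still recover $P^{x}\left(\Theta_{k}^{+}\right)$ from (\ref{eq:certainty-measure}) by monotone continuity of $P^{x}$.
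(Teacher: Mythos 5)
Your proof is correct and follows essentially the same route as the paper's: two applications of finite additivity, first to the disjoint decomposition $\Theta_{k}^{+}=\Theta_{k}^{\prime}\cup\Theta_{k}^{-}$ and then to $\Theta^{\prime}=\cup_{k=1}^{K}\Theta_{k}^{\prime}$. The paper's own proof is a single sentence asserting exactly these two additivity identities; your additional care about whether the $\arg\inf$ defining $\Theta_{k}^{+}$ is attained in $\mathcal{C}\left(x\right)$ addresses a point the paper silently passes over.
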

\begin{proof}
$P^{x}\left(\Theta_{k}^{+}\right)=P^{x}\left(\Theta_{k}^{\prime}\right)+P^{x}\left(\Theta_{k}^{-}\right)$
and $P^{x}\left(\cup_{k=1}^{K}\Theta_{k}^{\prime}\right)=\sum_{k=1}^{K}P^{x}\left(\Theta_{k}^{\prime}\right)$
follow from the mutual exclusivity of the sets and from the additivity
of the measure $P^{x}.$ 
\end{proof}
Thus, since, for all $k\in\left\{ 1,2,\dots,K\right\} ,$ both $\Theta_{k}^{+}$
and $\Theta_{k}^{-}$ are in $\mathcal{C}\left(x\right)$ and since
$\mathcal{C}\left(x\right)$ induces $\mathcal{A}^{x},$ equations
(\ref{eq:certainty-measure}) and (\ref{eq:sum-of-interval-P}) can
be used to calculate $P^{x}\left(\Theta^{\prime}\right)$ for any
$\Theta^{\prime}\in\mathcal{A}^{x}.$
\begin{defn}
\label{def:confidence-metameasure}Consider the dual nested set estimators
$\Theta_{\ge}:\Omega\times\left[0,1\right]\rightarrow\mathcal{C},$
which is valid, and\emph{ }$\Theta_{\le}:\Omega\times\left[0,1\right]\rightarrow\mathcal{C},$
which is nonconservative. For every $x\in\Omega,$ let $\mathcal{A}^{x}$
denote the common $\sigma$-field induced by each of the ranges of
$\hat{\Theta}_{\ge}\left(x;\bullet\right)$ and $\hat{\Theta}_{\le}\left(x;\bullet\right).$
If $P_{\ge}^{x}$ is the \emph{valid confidence measure}, the confidence
measure induced by $\Theta_{\ge},$ then $P_{\ge}^{x}\left(\Theta^{\prime}\right)$
is called a \emph{valid confidence level} of the hypothesis that $\theta\in\Theta^{\prime}.$
For each $x\in\Omega,$ the dual \emph{nonconservative confidence
measure} $P_{\le}^{x}$ and \emph{nonconservative confidence level}
$P_{\le}^{x}\left(\Theta^{\prime}\right)$ are defined analogously.
On the metaprobability space \[
\mathcal{M}_{\ge,\le}^{x}=\left(\Theta,\mathcal{A}^{x},\left\{ P_{\ge}^{x},P_{\le}^{x}\right\} \right),\]
called a \emph{confidence metameasure space}, the probability metameasure
$\mathcal{P}^{x}$ is called the \emph{confidence metameasure} \emph{induced
by }$\hat{\Theta}_{\ge}$ \emph{and }$\hat{\Theta}_{\le}$ \emph{given
some $x$ in }$\Omega.$ Accordingly, the \emph{confidence metalevel
}of the hypothesis that $\theta\in\Theta^{\prime}$ is $\mathcal{P}^{x}\left(\Theta^{\prime}\right)$
for all $\Theta^{\prime}\in\mathcal{A}^{x}.$ By the definition of
metaprobability, any hypothesis $\Theta^{\prime}\in\mathcal{A}^{x}$
has a confidence metalevel of \begin{equation}
\mathcal{P}^{x}\left(\Theta^{\prime}\right)=\left[P_{\ge}^{x}\left(\Theta^{\prime}\right)\wedge P_{\le}^{x}\left(\Theta^{\prime}\right),P_{\ge}^{x}\left(\Theta^{\prime}\right)\vee P_{\le}^{x}\left(\Theta^{\prime}\right)\right].\label{eq:metalevel}\end{equation}
\end{defn}
\begin{rem}
The restriction to $\sigma$-fields with events common to valid and
nonconservative confidence measures strongly constrains the choice
of the estimators to ensure the ability to assign a confidence metalevel
to any hypothesis of interest without a need for incomplete probability
measures. The further flexibility of allowing multiple $\sigma$-fields
in a class of measure spaces may be desirable in some applications. 
\end{rem}

\label{para:Strategies}Strategies developed within more conventional
frequentist frameworks provide guidance on the choice of which dual
set estimators by which to induce the confidence metameasure. Extending
the statistical model to incorporate information from the physics
of experimental design and measurement can rule out many pathological
set estimators as meaningless \citep{RefWorks:1310}. For instance,
the inclusion of transformation-group structure in the model leads
to set estimators that exactly match Bayesian posterior credible sets
under certain improper priors \citep{FRASER1968b,RefWorks:1443}.
Without taking advantage of extended models, \citet[121-122, 132-133]{RefWorks:436},
\citet[pp. 75-76]{RefWorks:SprottBook2000}, and \citet{RefWorks:437}
highlight advantages of incorporating information from the likelihood
function into set estimators; cf. Section \ref{sub:Likelihood}. 
\begin{example}[normal distribution]
\label{exa:normal}For $n$ independent random variables each distributed
according to $P_{\theta,\gamma},$ the normal distribution with mean
$\theta$ and variance $\gamma,$ the interval estimator $\Theta^{\alpha}$
given by \[
\Theta^{\alpha}\left(x;\rho\right)=\left[p_{x}^{-1}\left(\alpha\right),p_{x}^{-1}\left(\rho+\alpha\right)\right]\]
for all $\rho\in\left[0,1-\alpha\right]$ is nested and is an exact
$\rho\left(100\%\right)$ confidence interval for $\theta,$ where
$\alpha\in\left[0,1\right],$ $p_{x}\left(\theta^{\prime}\right)$
is the upper-tailed \emph{p}-value of the hypothesis that $\theta=\theta^{\prime},$
and $p_{x}^{-1}$ is the inverse of $p_{x}.$  Since $\Theta^{\alpha}$
is both valid and nonconservative, it is dual to itself, yielding
the equality of the valid and nonconservative confidence measures
$P_{\alpha,\ge}^{x}$ and $P_{\alpha,\le}^{x},$ each the distribution
of \[
\vartheta=\bar{x}+T_{n-1}\hat{\sigma}/\sqrt{n},\]
where $T_{n-1}$ is the random variable of the Student \emph{t} distribution
with $n-1$ degrees of freedom. Hence, the confidence metameasure
$\mathcal{P}_{\alpha}^{x}$ induced by $\Theta^{\alpha}$ is degenerate:\[
\left(\Theta,\mathcal{A}^{x},\left\{ P_{\alpha,\ge}^{x},P_{\alpha,\le}^{x}\right\} \right)=\left(\Theta,\mathcal{A}^{x},\left\{ P_{\alpha}^{x}\right\} \right)\]
If $\Theta^{\prime}$ is an interval, then\[
P_{\alpha}^{x}\left(\Theta^{\prime}\right)=p_{x}\left(\sup\Theta^{\prime}\right)-p_{x}\left(\inf\Theta^{\prime}\right)\]
for all $x\in\Omega$ and $\Theta^{\prime}\in\mathcal{A}^{x},$ from
which it follows that the confidence measure $P_{\alpha}^{x}$ does
not depend on the nested set estimator chosen and can thus be represented
by $P^{x}.$ 
\end{example}
Special properties of degenerate confidence metameasures are given
in Section \ref{sec:Single-measure-approximation}. The next example
involves a nondegenerate confidence metameasure.
\begin{example}[binomial distribution]
\label{exa:binomial}Let $P_{\theta}$ denote the binomial measure
with $n$ trials, success probability $\theta\in\Theta,$ and $C$-corrected,
upper-tail cumulative probabilities $p_{C,x}\left(\theta\right)=P_{\theta}\left(X>x\right)+CP_{\theta}\left(X=x\right),$
with $C\in\left[0,1\right].$ Consider the family $\mathcal{F}_{C}=\left\{ \Theta_{C}^{\alpha}:\alpha\in\left[0,1\right]\right\} $
of nested set estimators such that\[
\Theta_{C}^{\alpha}\left(x;\rho\right)=\begin{cases}
\left[p_{1-C,x}^{-1}\left(\alpha\right),p_{C,x}^{-1}\left(\alpha+\rho\right)\right] & \rho\in\left(0,1-\alpha\right]\\
\emptyset & \rho=0\\
\left[0,1\right] & \rho=1\end{cases}\]
for all $\alpha\in\left[0,1\right],$ $\rho\in\mathfrak{R}=\left[0,1-\alpha\right]\cup\left\{ 1\right\} ,$
$x\in\left\{ 0,1,...\right\} =\Omega,$ where\begin{equation}
p_{C^{\prime},x}^{-1}\left(\alpha^{\prime}\right)=\theta^{\prime}\iff p_{C^{\prime},x}\left(\theta^{\prime}\right)=\alpha^{\prime}.\label{eq:binomial}\end{equation}
Since the rates at which the valid $\left(C=0\right)$ and nonconservative
$\left(C=1\right)$ interval estimators cover $\theta$ are bound
according to \[
P_{\theta}\left(\theta\in\Theta_{0}^{\alpha}\left(X;\rho\right)\right)\ge\rho,\]
\[
P_{\theta}\left(\theta\in\Theta_{1}^{\alpha}\left(X;\rho\right)\right)\le\rho,\]
the sets $\mathcal{F}_{0}$ and $\mathcal{F}_{1}$ are valid and nonconservative
families of nested set estimators, respectively, and for any $\alpha\in\left[0,1\right],$
the valid set estimator $\Theta_{0}^{\alpha}$ is dual to the nonconservative
set estimator $\Theta_{1}^{\alpha},$ thus inducing the valid confidence
measure $P_{\alpha,0}^{x},$ the nonconservative confidence measure
$P_{\alpha,1}^{x},$ and the confidence metameasure $\mathcal{P}_{\alpha}^{x}$
on the $\sigma$-field $\mathcal{B}\left(\left[0,1\right]\right)$
for each $x\in\Omega.$ In order to weigh evidence in $X=x$ for the
hypothesis that $0\le\theta^{\prime}\le\theta\le\theta^{\prime\prime}\le1,$
equation (\ref{eq:certainty-measure}) furnishes \[
P_{\alpha,C}^{x}\left(\left[p_{1-C,x}^{-1}\left(\alpha\right),p_{C,x}^{-1}\left(\alpha+\rho_{C,x}\right)\right]\right)=\rho_{C,x},\]
and, with equation (\ref{eq:sum-of-interval-P}), \begin{eqnarray}
P_{\alpha,C}^{x}\left(\left[\theta^{\prime},\theta^{\prime\prime}\right]\right) & = & P_{\alpha,C}^{x}\left(\left[p_{1-C,x}^{-1}\left(\alpha\right),p_{C,x}^{-1}\left(\alpha+\rho_{C,x}^{\prime\prime}\right)\right]\right)-P_{\alpha,C}^{x}\left(\left[p_{1-C,x}^{-1}\left(\alpha\right),p_{C,x}^{-1}\left(\alpha+\rho_{C,x}^{\prime}\right)\right]\right)\label{eq:binomial-confidence-level}\\
 & = & \rho_{C,x}^{\prime\prime}-\rho_{C,x}^{\prime},\nonumber \end{eqnarray}
where\begin{eqnarray*}
\rho_{C,x}^{\prime} & = & p_{C,x}\left(\theta^{\prime}\right)-\alpha\\
\rho_{C,x}^{\prime\prime} & = & p_{C,x}\left(\theta^{\prime\prime}\right)-\alpha.\end{eqnarray*}
Since $\alpha$ drops out of the difference, let $P_{C}^{x}=P_{\alpha,C}^{x}.$
For any $\Theta^{\prime}\in\mathcal{B}\left(\left[0,1\right]\right),$
equations (\ref{eq:binomial-confidence-level}) and (\ref{eq:metalevel})
specify the confidence metalevel of the hypothesis that $\theta\in\Theta^{\prime}.$
To illustrate the reduction of confidence indeterminacy with additional
observations, the boundary values of $\mathcal{P}^{x}\left(\left[1/4,3/4\right]\right)$
are plotted against $n$ in Fig. \ref{fig:binomial} for the $\theta=2/3$
case. 
\end{example}
~%
\begin{figure}
\includegraphics[scale=0.35]{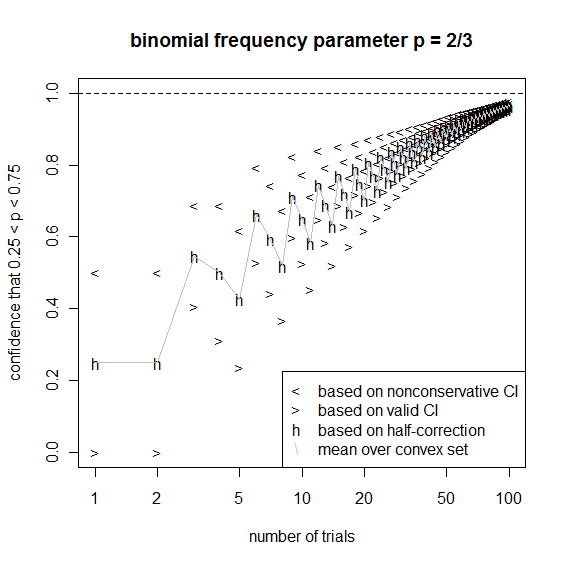}

\caption{Confidence levels of the hypothesis that $\theta,$ the limiting relative
frequency of successes, is between 1/4 and 3/4 as a function of $n,$
the number of independent trials, with $\theta=2/3$ as the unknown
true value. In the notation of Example \ref{exa:binomial}, the nonconservative
confidence level is $P_{1}^{x}\left(\left[1/4,3/4\right]\right),$
the valid confidence level is $P_{0}^{x}\left(\left[1/4,3/4\right]\right),$
and the half-corrected level is $P_{1/2}^{x}\left(\left[1/4,3/4\right]\right).$
The confidence level averaged over the convex set is defined in Section
\ref{sub:Collapse}. Sampling variation was suppressed by setting
each number $x$ of successes to the lowest integer greater than or
equal to $n\theta$ instead of randomly drawing values of $x$ from
the $\left\langle n,\theta\right\rangle $ binomial distribution.
\label{fig:binomial}}

\end{figure}

\subsection{\label{sub:Coherence}Coherence of confidence metalevels}

The confidence metameasure $\mathcal{P}^{x}$ on confidence space
$\mathcal{M}_{\ge,\le}^{x}$ models the reasoning process of an ideal
agent betting on inclusion of the true parameter value in elements
of $\mathcal{A}^{x},$ the $\sigma$-field of $\mathcal{M}_{\ge,\le}^{x},$
with upper and lower betting odds determined by the coverage rates
of the corresponding valid and nonconservative confidence sets. The
coherence of the agent's decisions may be evaluated by expressing
its betting odds in terms of upper and lower probabilities that lack
the additivity property of Kolmogorov's probability measures. Given
the dual functions $u:\mathcal{A}^{x}\rightarrow\left[0,1\right]$
and $v:\mathcal{A}^{x}\rightarrow\left[0,1\right]$ such that\begin{equation}
u\left(\Theta^{\prime}\right)+v\left(\Theta\backslash\Theta^{\prime}\right)=1,\label{eq:dual}\end{equation}
\[
u\left(\Theta^{\prime}\cup\Theta^{\prime\prime}\right)\ge u\left(\Theta^{\prime}\right)+u\left(\Theta^{\prime\prime}\right),\]
\[
v\left(\Theta^{\prime}\cup\Theta^{\prime\prime}\right)\le v\left(\Theta^{\prime}\right)+v\left(\Theta^{\prime\prime}\right)\]
for all disjoint $\Theta^{\prime}$ and $\Theta^{\prime\prime}$ in
$\mathcal{A}^{x},$ the values $u\left(\Theta^{\prime}\right)$ and
$v\left(\Theta^{\prime}\right)$ are the \emph{lower }and \emph{upper
probabilities} \citep[§9.3]{Molchanov2005b} of the hypothesis that
$\theta\in\Theta^{\prime}.$ The decision-theoretic interpretation
is that $u\left(\Theta^{\prime}\right)$ is the largest price an agent
would pay for a gain of $1_{\theta}\left(\Theta^{\prime}\right),$
whereas $v\left(\Theta^{\prime}\right)$ is the smallest price for
which the same agent would sell that gain, assuming an additive utility
function \citep{RefWorks:Walley1991}. The duality between $u$ and
$v$ expressed as equation (\ref{eq:dual}) means each function is
completely determined by the other. 

The function $u$ is called the \emph{lower envelope} of a family
$\mathfrak{P}$ of measures on $\mathcal{A}$ if

\[
u\left(\Theta^{\prime}\right)=\inf_{P\in\mathfrak{P}}P\left(\Theta^{\prime}\right)\]
for all $\Theta^{\prime}\in\mathcal{A}$ (\citealp[§15.2]{ColettiScozzafava2002b};
\citet[§9.3]{Molchanov2005b}). Since the lower envelope\emph{ }of
a family of probability measures is a \emph{coherent lower probability
}(\citealp[§3.3.3]{RefWorks:Walley1991}; \citet[§9.3]{Molchanov2005b})
and since $\left\{ P_{\ge}^{x},P_{\le}^{x}\right\} $ as specified
in Definition \ref{def:confidence-metameasure} constitutes such a
family, the agent weighing evidence for any hypothesis $\theta\in\Theta^{\prime}$
by $\mathcal{P}^{x}\left(\Theta^{\prime}\right),$ with $\Theta^{\prime}\in\mathcal{A}^{x},$
satisfies the minimal set of rationality axioms of \citet{RefWorks:Walley1991}.
It follows that the agent avoids sure loss by making decisions according
to the lower and upper probabilities \[
u\left(\Theta^{\prime}\right)=P_{\ge}^{x}\left(\Theta^{\prime}\right)\wedge P_{\le}^{x}\left(\Theta^{\prime}\right),\]
\[
v\left(\Theta^{\prime}\right)=1-u\left(\Theta\backslash\Theta^{\prime}\right).\]

Conversely, the framework of Section \ref{sub:Confidence-measures-etc}
can be presented starting with de Finetti's prevision and the related
concept of coherent extension \citep{RefWorks:Walley1991,ColettiScozzafava2002b}
as follows. An intelligent agent first sets its prices for buying
and selling gambles on the hypotheses corresponding to the elements
of $\mathcal{C}$ according to the confidence coefficients of valid
and nonconservative nested set estimators. Then it extends its prices
or \emph{previsions }to the family of the two probability measures
on the $\sigma$-field induced by $\mathcal{C}$ in order to evaluate
the probability of a hypothesis $\theta\in\Theta^{\prime}$ for some
$\Theta^{\prime}$ in the $\sigma$-field but not in $\mathcal{C}.$
This family in turn yields coherent lower and upper probabilities
that equal the initial buying and selling prices whenever the latter
apply, i.e., when the hypothesis is that $\theta\in\Theta^{\prime}$
for some $\Theta^{\prime}\in\mathcal{C}.$ Thus, a Dutch book cannot
be made against the agent.

\subsection{\label{sub:Decisions-given-confidence}Decisions under arbitrary
loss}

This section generalizes betting under 0-1 loss to making confidence-based
decisions under any unbounded loss function. Confidence metalevels
do not describe the actual betting behavior of any human agent, but
instead prescribe decisions, including amounts bet on any hypothesis
involving $\theta,$ given that the agent will incur a loss of $L_{a}\left(\theta\right)$
for taking action $a.$ 

According to a natural generalization of the Bayes decision rule of
minimizing loss averaged over a posterior distribution, action $a^{\prime}$
\emph{dominates} (is rationally preferred to) action $a^{\prime\prime}$
if and only if \[
\forall\mathcal{E}^{\prime}\in\mathsf{\mathfrak{E}}\left(L_{a^{\prime}}\right),\mathcal{E}^{\prime\prime}\in\mathsf{\mathfrak{E}}\left(L_{a^{\prime\prime}}\right):\mathcal{E}^{\prime}\le\mathcal{E}^{\prime\prime}\]
\[
\exists\mathcal{E}^{\prime}\in\mathsf{\mathfrak{E}}\left(L_{a^{\prime}}\right),\mathcal{E}^{\prime\prime}\in\mathsf{\mathfrak{E}}\left(L_{a^{\prime\prime}}\right):\mathcal{E}^{\prime}<\mathcal{E}^{\prime\prime},\]
where both expectation intervals (Definition \ref{def:expectation-interval})
are with respect to the same confidence metameasure $\mathcal{P}^{x}.$
The confidence metameasures impose no restrictions on agent decisions
other than restricting them to non-dominated actions. 

This use of the confidence metameasure in making decisions follows
a previous generalization of maximizing expected utility to multi-valued
probability. (Here, the utilities are expressed in terms of equivalent
losses, as is conventional in the statistics literature.) Kyburg (\citeyear[pp. 180, 231-234]{RefWorks:1486};
\citeyear{KyburgJrDegrees2003139,Kyburg2006}) and \citet[§1.4]{Kaplan1996b}
used the principle of dominance to make decisions on the basis of
intervals of expected utilities determined by the expected utility
of each probability measure: an action yielding expected utilities
in interval $A$ is preferred to that yielding expected utilities
in interval $B$ if at least one member of $A$ is greater than all
members of $B$ and if no member of $A$ is less than any member of
$B$. 

While multi-valued probabilities do not dictate how to choose one
of the non-dominated actions in situations that demand a choice equivalent
to deciding between accepting a hypothesis or accepting its alternative,
they may prove more practical when indecision can be broken by additional
considerations, as \citet[pp. 161-162, 235-241]{RefWorks:Walley1991}
explained. In the case of a human agent, \citet{KyburgJrDegrees2003139}
argued for selecting among non-dominated actions on the basis of considerations
that cannot be represented mathematically rather than selecting on
the basis of an arbitrary prior distribution. 

If a single-valued estimate of $1_{\Theta^{\prime}}\left(\theta\right)$
is needed for some $\Theta^{\prime}\in\mathcal{A},$ the \emph{indeterminacy}
$\sup\mathcal{P}^{x}\left(\Theta^{\prime}\right)-\inf\mathcal{P}^{x}\left(\Theta^{\prime}\right)$
can quantify a set estimator's degree of undesirable conservatism;
some ways to eliminate such indeterminacy by replacing a confidence
metameasure with a confidence measure are mentioned in Section \ref{sec:Single-measure-approximation}.
If indeterminacy is removed, the above dominance principle reduces
to the principle of minimizing expected loss (§\ref{sub:Decision-degenerate}).

\subsection{\label{sub:Likelihood}Likelihood principle}

While in some cases the likelihood function can guide the construction
of set estimators with desirable properties, as noted in Section \ref{para:Strategies},
it plays no general role in confidence decision theory. Consequently,
inference does not always obey the likelihood principle: some set
estimators lead to values of evidential support and partial proof
that depend on information in the sampling model not encoded in the
likelihood function; cf. \citet{Wilkinson1977}. 

An advantage of coherent statistical methods in general is the flexibility
they give the researcher to simultaneously consider as many hypotheses
and interval estimates for $\theta$ as desired. Although such versatility
is usually presented as a consequence of the likelihood principle
and Bayesian statistics, they are not needed to secure it once coherence
has been established (§\ref{sub:Coherence}). 

That the proposed framework is not constrained by the likelihood principle
distinguishes it from Peter Walley's $W_{1}$ and $W_{2}$, two inferential
theories of indeterminate (multi-valued) probability intended to satisfy
the best aspects of both coherence and frequentism \citep{RefWorks:1225}.
The coverage error rate of $W_{1}$ tends to be much higher than the
nominal rate in order to ensure simultaneous compliance with the likelihood
principle. Although the principle often precludes approximately correct
frequentist coverage, more power can be achieved by less stringently
controlling the error rate \citep{RefWorks:1225}. \citet{RefWorks:1225}
did not report the degree of conservatism of $W_{2}$, a normalized
likelihood method. With a uniform measure for integration over parameter
space, the normalized likelihood is equal to the Bayesian posterior
that results from a uniform prior.

\section{\label{sec:Single-measure-approximation}Frequentist posterior distribution}

An important realm for practical applications of the above framework
is the situation in which inference may reasonably depend only on
a single confidence measure $P^{x}$ rather than directly on a confidence
metameasure $\mathcal{P}^{x}.$ That is possible not only in the special
case of degeneracy due to the availability of a suitable exact nested
set estimator (Example \ref{exa:normal}), but can also be achieved
either by transforming a nondegenerate confidence metameasure to a
confidence measure (§\ref{sub:Collapse}) or by approximating a confidence
measure. Remark \ref{rem:asymptotics} concerns the latter strategy
in the case of a scalar parameter of interest. 

Relying solely on a single confidence measure for inference and decision
making (§\ref{sub:Decision-degenerate}) enjoys the coherence of theories
of utility maximization usually associated with Bayesianism (§\ref{sub:The-Bayesian-framework}).
In the ubiquitous special case of a scalar parameter of interest,
a single confidence level of a hypothesis is a consistent estimator
of whether the hypothesis is true under more general conditions than
is the \emph{p}-value as such an estimator (§\ref{sub:Scalar-parameter}).

\subsection{\label{sub:Collapse}Reducing a confidence metameasure }

Interpreting upper and lower probabilities as bounds defining a family
of permissible probability measures, \citet{Williamson2007b} argued
for minimizing expected loss with respect to a single distribution
within the family instead of using outside considerations to choose
among actions that are non-dominated in the sense of Section \ref{sub:Decisions-given-confidence}.
Consider the confidence metameasure space $\mathcal{M}_{\ge,\le}^{x}=\left(\Theta,\mathcal{A}^{x},\left\{ P_{\ge}^{x},P_{\le}^{x}\right\} \right)$
of confidence metameasure $\mathcal{P}^{x}$ for some $x\in\Omega.$
A much larger family $\mathfrak{P}$ of measures on $\mathcal{A}^{x}$
such that $\left\{ P_{\ge}^{x},P_{\le}^{x}\right\} $ and $\mathfrak{P}$
have the same lower envelope $u$ is the convex set \[
\mathfrak{P}=\left\{ P_{D}^{x}:D\in\left[0,1\right]\right\} ,\]
where $P_{D}^{x}=\left(1-D\right)P_{\ge}^{x}+DP_{\le}^{x},$ thereby
forming the metaprobability space $\tilde{\mathcal{M}}_{\ge,\le}^{x}=\left(\Theta,\mathcal{A}^{x},\mathfrak{P}\right)$
and probability metameasure $\tilde{\mathcal{P}}^{x};$ cf. \citet[§11]{RefWorks:1456};
\citet{WassermanPriorEnvelopes1990}; \citet[pp. 40-42]{Paris232410}.
Since $\tilde{\mathcal{P}}^{x}=\mathcal{P}^{x},$ the measure $P^{x}\in\mathfrak{P}$
selected according to some rule is called a \emph{reduction }of $\mathcal{P}^{x}.$ 

Effective reduction of $\mathcal{P}^{x}$ to a single measure $P^{x}$
can be accomplished by averaging over $\mathfrak{P}$ with respect
to the Lebesgue measure. That average of the convex set is simply
the mean of the valid and nonconservative confidence measures:\begin{equation}
P^{x}\left(\Theta^{\prime}\right)=\int_{0}^{1}P_{D}^{x}\left(\Theta^{\prime}\right)dD=\left(P_{\ge}^{x}\left(\Theta^{\prime}\right)+P_{\le}^{x}\left(\Theta^{\prime}\right)\right)/2=P_{1/2}^{x}\left(\Theta^{\prime}\right)\label{eq:mean-of-convex-set}\end{equation}
for all $\Theta^{\prime}\in\mathcal{A}^{x};$ recall that $P_{1/2}^{x}\in\mathfrak{P}.$

Other automatic methods of reducing a metameasure to a single measure
are also available. For example, the recommendation of \citet{Williamson2007b}
to select the measure within the family that maximizes the entropy
is minimax under Kullback-Leibler loss \citep{Gruenwald20041367}.
\begin{example}[Binomial distribution, continued from Example \ref{exa:binomial}]
 As the gray line in Fig. \ref{fig:binomial} indicates, the mean
measure $P^{x}$ of the convex set (\ref{eq:mean-of-convex-set})
yields a confidence level between those of the valid and nonconservative
confidence measures, discarding the notable reduction in confidence
nondegeneracy from $n=1$ to $n=10$ as irrelevant for action in situations
that do not permit indecision. The approximate (half-corrected) confidence
level also disregards nondegeneracy information, yielding in this
special case the same levels of confidence as does $P^{x}.$ In contrast,
the confidence metameasure records the nondegeneracy as the difference
between the agent's selling and buying prices of a gamble with a payoff
contingent on whether or not $\theta\in\left[1/4,3/4\right],$ a difference
that becomes less important as $n$ increases. 
\end{example}

\subsection{\label{sub:Decision-degenerate}Confidence-based decision and inference}

\subsubsection{\label{sub:Minimizing-expected-loss}Minimizing expected loss}

In a situation requiring a decision involving the acceptance or rejection
of the hypothesis that $\theta\in\Theta^{\prime},$ that is, under
a 0-1 loss function, an agent guided by a single measure $P^{x}$
regards $P^{x}\left(\vartheta\in\Theta^{\prime}\right)/P^{x}\left(\vartheta\notin\Theta^{\prime}\right)$
as the fair betting odds and will act accordingly. The hypothesis
$\theta\in\Theta^{\prime}$ will be accepted only if the odds $P^{x}\left(\vartheta\in\Theta^{\prime}\right)/P^{x}\left(\vartheta\notin\Theta^{\prime}\right)$
are greater than the ratio of the cost that would be incurred if $\theta\notin\Theta^{\prime}$
to the benefit that would be gained if $\theta\in\Theta^{\prime}$.
Otherwise, unless the odds are exactly equal to 1, the hypothesis
$\theta\notin\Theta^{\prime}$ will be accepted. Under a more general
class of loss functions, the decision theory of Section \ref{sub:Decisions-given-confidence}
reduces to the minimization of expected loss given the degeneracy
or reduction of the confidence metameasure. Section \ref{sub:Non-Bayesian-coherence}
notes implications for axiomatic coherence.

\subsubsection{\label{sub:Hypothesis}Applications to hypothesis assessment}

As the findings of basic science are arguably valuable even if never
applied and since the ways in which any inductive inference will be
used are often unpredictable \citep[pp. 95-96, 103-106]{RefWorks:985},
$P^{x}\left(\vartheta\in\Theta^{\prime}\right)$ may be reported as
an estimate of $1_{\Theta^{\prime}}\left(\theta\right)$ for use with
currently unknown loss functions (cf. \citealp{RefWorks:1203}; \citealt{Hwang1992490}).
That inferential role is currently played in many of the sciences
by the \emph{p}-value interpreted as a measure of evidence in {}``significance
testing'' \citep{RefWorks:294}, but its notorious lack of coherence
has prevented its universal acceptance (\citealp[e.g., ][]{RefWorks:122}).
As will become clear in Section \ref{sub:Scalar-parameter}, $P^{x}\left(\vartheta\in\Theta^{\prime}\right)$
can differ markedly from the \emph{p}-value for testing $\theta\in\Theta^{\prime}$
as the null hypothesis not only in interpretation but also in numeric
value.

~
\begin{example}
\citet[§3]{RefWorks:249} consider the hypothesis that the mean $\xi$
of a $\nu$-dimensional multivariate normal distribution of an identity
covariance matrix is in an origin-centered sphere of radius $\theta^{\prime\prime}$
but outside a concentric sphere of radius $\theta^{\prime}$. Let
$\theta=||\xi||$, and let $\chi_{\nu}^{2}$ be the chi-squared cumulative
distribution function (CDF) of $\nu$ degrees of freedom. Since the
\emph{p}-value of the null hypothesis that $\theta\ge\theta^{\prime}$
is $\chi_{\nu}^{2}\left(\left(||x||/\theta^{\prime}\right)^{2}\right)$,
the confidence level of the hypothesis that $\theta^{\prime}<\theta<\theta^{\prime\prime}$
is \begin{eqnarray*}
P^{x}\left(\theta^{\prime}<\vartheta<\theta^{\prime\prime}\right)= & \chi_{\nu}^{2}\left(\left(||x||/\theta^{\prime}\right)^{2}\right)-\chi_{\nu}^{2}\left(\left(||x||/\theta^{\prime\prime}\right)^{2}\right) & ,\end{eqnarray*}
the value of which \citet[§4]{RefWorks:249} justified as an approximation
to a Bayesian posterior probability. The coherence of the confidence
measure $P^{x}$ immunizes it against the inconsistencies that \citet[§3]{RefWorks:249}
noticed among \emph{p}-values: contradictory conclusions would be
reached depending on which hypothesis was considered as the null. 
\end{example}
A practical implication of working in the confidence metameasure framework
is that since the simple bootstrap methods of \citet{RefWorks:249}
based on a scalar pivot enable close approximations to \emph{p}-value
functions \citep{Efron19933,RefWorks:127,RefWorks:130,RefWorks:1092},
they can solve related problems too complex for more rigid Neyman-Pearson
methods and yet without any need to seek matching priors for justification;
cf. \citet{Efron2003135}. Applications include assigning levels of
confidence to phylogenetic tree branches \citet{RefWorks:250}, to
observed local maxima in an estimated function \citep{RefWorks:249,Hall20042098},
and to gene network connections found on the basis of microarray data
\citep{Kamimura2003350}. \citet{Liu1997266} studied operating characteristics
of the \emph{empirical strength probability} (ESP), which in the one-dimensional
case is equal to some confidence probability $P^{x}\left(\theta^{\prime}<\vartheta<\theta^{\prime\prime}\right)$
defined with respect to a bootstrap algorithm.

See \citet{Polansky2007b} for an accessible introduction to the
general problem of {}``observed confidence levels'' of composite
hypotheses, which \citet{RefWorks:249} had dubbed the {}``problem
of regions,'' understood to include applications to ranking and selection
as well as those mentioned above. The fundamental characteristic of
this approach is not the bootstrapping technique as much as the property
that the level of confidence in any given region is equal to the coverage
rate of a corresponding confidence set. Until the ESP is seen to have
a compelling justification of its own, it may continue to be regarded
merely as a method of last resort since it is in general neither a
Bayesian posterior probability nor a Neyman-Pearson \emph{p}-value:
{}``For {[}the latter{]} reason, it seems best to use the ESP only
when more specific, direct testing methods are not available for a
particular problem'' \citep{DavisonHinkleyYoung2003b}. That the
ESP and other approximations of the confidence value are more acceptable
than \emph{p}-values as estimates of whether the parameter lies in
a given region (§\ref{sub:Consistency-of-support}) gives cause to
reconsider that judgment even apart from the coherence of the confidence
value.

\begin{example}[beyond statistical significance]
\label{exa:hierarchical}Consider the null hypothesis $\theta^{\prime}-\Delta\le\theta\le\theta^{\prime}+\Delta$,
where the non-negative scalar $\Delta$ is a minimal degree of practical
or scientific significance in a particular application. For instance,
researchers developing methods of analyzing microarray data are increasingly
calling for specification of a minimal level of biological significance
when testing null hypotheses of equivalent gene expression against
alternative hypotheses of differential gene expression \citep{RefWorks:89,RefWorks:427,RefWorks:486,RefWorks:435}.
\citet{RefWorks:21} and \citet{DavisJMcCarthy03152009} in effect
approached the problem with \emph{p}-values of composite null hypotheses,
in conflict with the confidence measure approach (Example \ref{exa:equivalence}
and Section \ref{sub:Consistency-of-support}). 
\end{example}
Section \ref{sub:Confidence-vs-p} provides additional examples that
contrast hypothesis confidence levels with hypothesis \emph{p}-values
in practical applications.

\subsubsection{\label{sub:Estimation-prediction}Other applications of minimizing
expected loss}

The framework of minimizing expected loss with respect to a confidence
measure (§\ref{sub:Minimizing-expected-loss}) not only leads to assigning
confidence levels to hypotheses but also provides methods for optimal
estimation and prediction. In addition, confidence-measure estimators
and predictors have frequentist properties only shared with Bayesian
estimators and predictors when the Bayesian posterior is a confidence
measure.

As the frequentist posterior, the confidence measure gives all the
point estimators provided by the Bayesian posterior. For example,
the frequentist posterior mean, minimizing expected squared error
loss, is $\bar{\vartheta}_{x}=\int_{\Theta}\vartheta dP^{x}\left(\vartheta\right)$
and the frequentist posterior $p$-quantile, minimizing expected loss
for a threshold-based function of $p$ \citep[App. B]{CarlinLouis3},
is $\vartheta\left(p\right)$ such that $p=P^{x}\left(\vartheta<\vartheta\left(p\right)\right).$
Assuming a differentiable CDF of $P^{x},$ \citet{RefWorks:1037}
proved the weak consistency of the frequentist posterior median $\vartheta\left(1/2\right)$
and the frequentist posterior mean $\bar{\vartheta}_{x}$ and proved
that the former is median-unbiased. In that case, the frequentist
mode, the value maximizing the probability density function of $\vartheta$,
is also available if a unique maximum exists. 

The \emph{frequentist posterior predictive distribution}, the frequentist
analog of the Bayesian posterior predictive distribution of a new
observation of $X$, is $P^{\left(x\right)}=\int_{\Theta}P_{\vartheta,\gamma}dP^{x}\left(\vartheta\right)$
for all $x\in\Omega$. (\citet{RefWorks:1270}, \citet{RefWorks:1369},
and \citet{RefWorks:1175} considered this with fiducial-like distributions
in place of the confidence measure $P^{x}$.) Appropriate point predictions
are $\bar{\xi}_{x}=\int_{\Omega}X\left(\omega\right)dP^{\left(x\right)}\left(\omega\right)$
in the {}``regression'' case of continuous $\Omega$ and $\tilde{\xi}_{x}=1_{\left[1/2,1\right]}\left(P^{\left(x\right)}\left(X=1\right)\right)$
in the {}``classification'' case in which $\Omega=\left\{ 0,1\right\} $.
If $P^{x}$ is approximated using a bootstrap algorithm as in Section
\ref{sub:Hypothesis}, then the resulting values of $\bar{\xi}_{x}$
and $\tilde{\xi}_{x}$ are bootstrap aggregation (bagging) predictions;
\citet{Breiman1996123} found bagging to reduce prediction error.
The confidence predictive distribution can also be used to determine
sizes of new studies by accounting for uncertainty in the effect size.
(The classical method of determining the sample size of a planned
experiment is often criticized for relying on a point estimate of
the effect size.) 

~

\subsection{\label{sub:The-Bayesian-framework}Confidence versus Bayesian probability}

As the examples of Section \ref{sub:Decision-degenerate} illustrate,
many uses of Bayesian posterior distributions are completely compatible
with confidence measures since both distributions of parameters deliver
coherent inferences in the form of probabilities that hypotheses of
interest are true. However, to the extent that updating parameter
distributions in agreement with valid confidence intervals conflicts
with updating them by Bayes's formula, confidence decision theory
differs fundamentally from the two dominant forms of Bayesianism,
{}``subjective'' Bayesianism, which is seldom used by the statistics
community, and {}``objective'' Bayesianism broadly defined as a
collection of algorithms for generating prior distributions from sampling
distributions or from invariance arguments. Nonetheless, the proposed
framework follows from an application of de Finetti's theory of prevision
to an agent that makes decisions according to certain confidence levels
(§\ref{sub:Coherence}).

\subsubsection{Bayesian conditioning}

As demonstrated in Section \ref{sub:Coherence}, the proposed framework
for frequentist inference satisfies coherence, which does not require
the probability distribution of the parameters to correspond to any\emph{
}Bayesian posterior distribution, a prior distribution conditional
on the observed data in the Kolmogorov sense, as is frequently supposed.
Not coherence but another pillar of Bayesianism mandates that the
posterior distribution, i.e., the parameter distribution used for
decisions after making an observation, must equal the prior distribution
conditioned on the observation \citep{temporal-Goldstein}. That assumption,
usually implicit, has been stated as a plausible principle of learning
from data: 
\begin{defn}[Bayesian temporal principle]
Consider the \emph{prior distribution} $\pi$, a probability measure
induced by a random vector $\vartheta$ in $\Theta$, the parameter
space. Let the \emph{update rule} $\pi_{\bullet}^{\prime}$ denote
a function mapping $\Omega$, the sample space, to a set of probability
measures, each defined on $\Theta$. If, for all $x^{\prime}\in\Omega,$
the \emph{posterior distribution }$\pi_{x^{\prime}}^{\prime}$ induced
by random quantity $\vartheta_{x^{\prime}}^{\prime}$ in $\Theta$
is the conditional distribution of $\vartheta$ given $X^{\prime}=x^{\prime}$,
then $\pi_{\bullet}^{\prime}$ satisfies the \emph{Bayesian temporal
principle,} $\pi_{x^{\prime}}^{\prime}$ is called a \emph{Bayesian
posterior distribution}, and the equivalence between the posterior
and conditional distributions is written as\[
\vartheta_{x^{\prime}}^{\prime}\equiv\vartheta|x^{\prime}.\]
\end{defn}
\begin{rem}
In the one-dimensional case, the Bayesian temporal principle stipulates
that, for all $\Theta^{\prime}\subseteq\Theta$, \[
\pi_{x}^{\prime}\left(\vartheta^{\prime}\in\Theta^{\prime}\right)=\pi\left(\vartheta\in\Theta^{\prime}|X^{\prime}=x^{\prime}\right),\]
where $\pi_{x}^{\prime}$ and $\pi$ are the posterior and prior distributions
of $\vartheta^{\prime}$ and $\vartheta$, respectively. Adding a
prime symbol $\left(^{\prime}\right)$ for each successive observation
gives $\vartheta_{x^{\prime}}^{\prime}\equiv\mathbf{\vartheta}|x^{\prime}$,
$\vartheta_{x^{\prime\prime}}^{\prime\prime}\equiv\vartheta_{x^{\prime}}^{\prime}|x^{\prime\prime}$,
$\vartheta_{x^{\prime\prime\prime}}^{\prime\prime\prime}\equiv\vartheta_{x^{\prime\prime}}^{\prime\prime}|x^{\prime\prime\prime}$,
and so forth. \citet{RefWorks:1244} coined the name of the principle,
explaining that it unreasonably requires that an agent's conditional
betting odds (prior odds conditional on a contemplated future observation)
determines its future betting odds (posterior odds as a function of
the actual observation). In other words, the current rate of machine
learning is limited by the previous strength of machine belief.
\end{rem}
\citet{RefWorks:1244} pointed out that although Bayesians follow
the temporal principle when using Bayes's formula, they disregard
it every time they revise a prior or sampling model upon seeing new
data. Such revision occurs whenever posterior predictions are subjected
to frequentist model checking procedures such as cross validation.
One rationale for revising the prior is that poor frequentist performance
may indicate that it did not adequately reflect the available information
as well as it might have had it been more carefully elicited. Another
is the receipt of new information that cannot be represented in the
probability space of the initial prior \citep{DiaconisZabell1982b}.

\subsubsection{\label{sub:Non-Bayesian-coherence}Non-Bayesian coherence}

Confidence decision theory not only satisfies coherence in the sense
of avoiding sure loss (§\ref{sub:Coherence}), but, when reduced to
the minimization of expected loss with respect to a single confidence
measure (§\ref{sub:Decision-degenerate}), is also coherent in the
sense of axiomatic systems of expected utility maximization \citep{MaxUtility1944,RefWorks:126}.
While both approaches to coherence support the concept of placing
bets in accord with the laws of probability, including conditional
probability for called-off bets, none of the approaches entails the
equality of conditional probability as defined by Kolmogorov and posterior
probability as the hypothesis probability updated as a function of
observed data. Replacing probabilities with proposition truth values
and conditional probabilities with theorems (statements of implication)
furnishes an illustration from deductive logic \citep{RefWorks:1203}:
an agent whose set of propositions held to be true do not contradict
each other at any point in time is completely self-consistent. However,
the agent cannot comply with the deductive version of the Bayesian
temporal principle unless none of the truth values ever requires revision
\citep{ColinHowson12011997}. As a finitely additive probability distribution,
the confidence measure also agrees with axiomatic systems of probabilistic
logic such as that of \citet{RTCox}.

The above accounts of coherence provide no support for the Bayesian
temporal principle since their theorems involve conditional probability,
not posterior probability as specified by some update rule $\pi_{\bullet}^{\prime}$.
Simply defining\emph{ }the posterior distribution to be Kolmogorov's
conditional distribution given the data either specifies nothing about
how parameter distributions are updated with new data or conceals
the assumption of the Bayesian temporal principle \citep{RefWorks:1097}. 

Even though the statistical literature refers to many theorems supporting
coherence and rationality as understood in Section \ref{sub:Coherence},
discussion of the foundational principle of Bayesianism has instead
taken place mostly in the philosophical literature. David Lewis \citep{Teller1973218}
presented a transformation of the Dutch book game (§\ref{sub:Coherence})
into one in which the gambler knows the rule the casino agent uses
to update its betting odds on receipt of new information. In that
game, but not in the original Dutch book game, violation of the Bayesian
temporal principle leads the casino to sure loss \citep{Teller1973218,RefWorks:1258}.
Since such violation occurs over time, it is considered a breach of
\emph{diachronic game-theoretic coherence}, a restriction on the degree
to which an agent's betting odds can change over time, as opposed
to \emph{synchronic game-theoretic coherence}, a consistency in an
agent's betting odds at any given time \citep{Armendt1992}. Accordingly,
the Dutch book arguments for diachronic coherence\emph{ }have been
considered much weaker \citep{RefWorks:1255,RefWorks:Goldstein2006,Williamson20091}
than those for synchronic coherence, the type of coherence supported
by the theorems of \citet{RefWorks:43} and \citet{RefWorks:126}.
\citet{Goldstein199755}, \citet[pp. 256-260]{Hacking2001}, and \citet{Williamson20091},
while accepting Dutch book arguments for synchronic coherence, do
not consider diachronic coherence to be a requirement of logical thought.
\citet{Hild1998b} distinguished game-theoretic diachronic coherence
from  decision-theoretic diachronic coherence, arguing that the latter
rules out the Bayesian temporal principle as incoherent. Another difficulty
is that some Dutch book arguments lead to versions of diachronic coherence
that conflict with the Bayesian temporal principle \citep{Armendt1992}. 

In summary, the theorems routinely presented as proof that all rational
thought or coherent decision making must be Bayesian actually prove
no more than the irrationality of violating the logic of standard
probability theory. Thus, any decision-theoretic framework representing
unknown values as random quantities mapped from some probability space
stands on equal ground with Bayesianism as far as the minimal requirements
of rationality are concerned. Such frameworks include geometric conditioning
\citep{RefWorks:1244}, probability kinematics \citep{DiaconisZabell1982b,Jeffrey2004},
dynamic coherence \citep{RefWorks:1242,RefWorks:1241}, and relative
entropy maximization \citep{Gruenwald20041367,Jaeger2005b,Williamson20091}
as well as confidence decision theory (§\ref{sec:frequentist-framework}).

\subsubsection{Objections to frequentist posteriors}

Since, neglecting sufficiency and ancillarity considerations, the
confidence level is numerically equal to the fiducial probability
in the case of a one-dimensional parameter of interest given continuous
data \citep{Wilkinson1977}, some classical Bayesian objections against
the coherence of fiducial distributions apply with equal force against
the coherence of the confidence measure. The strength of such arguments
is now evaluated in light of the above distinction between axiomatic
coherence and the Bayes update rule. 

In the present framework, confidence-based or fiducial probabilities
of hypotheses correspond to reasonable betting odds, a consequence
that \citet{RefWorks:1187} considered impossible since \citet{RefWorks:1159}
had demonstrated that fiducial distributions are Bayesian posteriors
only in certain special cases and since placing conditional bets contrary
to conditional probability leads to certain loss. The conclusion drawn
by \citet{RefWorks:1187} would only follow under the widely held
but incorrect assumption that a parameter distribution must be a Bayesian
posterior for it to satisfy coherence. \citet{RefWorks:1159}, extending
the work of \citet{RefWorks:1103}, actually had found conditions
under which the fiducial distribution violates the Bayesian temporal
principle considered in Section \ref{sub:The-Bayesian-framework},
not that a conditional fiducial distribution is incompatible with
the definition of a conditional probability distribution. 

\citet{RefWorks:1159} also demonstrated that violation of the Bayesian
temporal principle means the pivot is not unique, leading to non-unique
fiducial distributions. In light of the subsequent failure of a generation
of statisticians to identify any genuinely noninformative priors (\citealp{MarginalizationParadoxes1973};
\citealp[pp. 226-235]{RefWorks:Walley1991}; \citealp{RefWorks:81};
\citealp{RefWorks:1443}), the belated rejoinder is that Bayesian
posteriors lack uniqueness as well \citep{10.1057/9780230226203.0564,RefWorks:1175}.
Just as given a prior, sampling model, and data, all inferences made
using the resulting Bayesian posterior measure are coherent, so given
an exact estimator, sampling model, and data, all inferences made
using the resulting confidence or fiducial measure are equally coherent.
Thus, the selection of frequentist set estimators parallels the selection
of priors, and in each case such selection may depend on the intended
application. Section \ref{para:Strategies} points to reasonable criteria
for such selection.

\subsection{\label{sub:Scalar-parameter}\label{sec:frequentist-framework}Scalar
subparameter case}

The equality between tail probabilities of a confidence measures and
\emph{p}-values will be used to prove a consistency property that
holds under more general conditions for a confidence level than for
a \emph{p}-value as estimators of composite hypothesis truth.

\subsubsection{\label{sub:Cumulative-confidence}Confidence CDF as the \emph{p}-value
function}

If decisions are based on a single confidence measure of a scalar
parameter of interest, then the CDF of that measure is an upper-tailed
\emph{p}-value function. 
\begin{defn}
Consider a function $p^{+}:\Omega\times\Theta\rightarrow\left[0,1\right]$
such that $p^{+}\left(x,\bullet\right)=p_{x}^{+}\left(\bullet\right)$
is a CDF for all $x\in\Omega$ and such that \begin{equation}
P_{\xi}\left(p_{X}^{+}\left(\theta\right)<\alpha\right)=\alpha\label{eq:uniform}\end{equation}
for all $\theta\in\Theta$, $\xi\in\Xi$, and $\alpha\in\left[0,1\right].$
Then, for any $x\in\Omega,$ the map $p_{x}^{+}:\Theta\rightarrow\left[0,1\right]$
is called an \emph{upper-tail p}-value function for $\theta.$ Likewise,
$p_{x}^{-}:\Theta\rightarrow\left[0,1\right]$ is called a \emph{lower-tail
p}-value function if \emph{\begin{equation}
p_{x}^{-}\left(\theta\right)=1-p_{x}^{+}\left(\theta\right)\label{eq:pvalue-functions}\end{equation}
}for all $\theta\in\Theta$ and for all $x\in\Omega.$ 
\end{defn}
Uniformly distributed under the simple null hypothesis that $\theta=\theta^{\prime},$
$p_{x}^{-}\left(\theta^{\prime}\right)$ and $p_{x}^{+}\left(\theta^{\prime}\right)$
are exact \emph{p}-values of one-sided tests. Since equation (\ref{eq:pvalue-functions})
is an isomorphism between the two \emph{p}-value functions, the pair
$\left\langle p_{x}^{-}\left(\theta^{\prime}\right),p_{x}^{+}\left(\theta^{\prime}\right)\right\rangle $
will be called the \emph{p}-value function, either element of which
may be designated by $p_{X}^{\pm}\left(\theta^{\prime}\right).$ The
\emph{two-sided p}-value of the null hypothesis that $\theta$ is
in a central region $\Theta^{\prime}$ of $\Theta$  is \[
p_{x}\left(\Theta^{\prime}\right)=2\sup_{\theta^{\prime}\in\Theta^{\prime}}p_{x}^{-}\left(\theta^{\prime}\right)\wedge p_{x}^{+}\left(\theta^{\prime}\right)\]
for all $x\in\Omega,$ reducing to the usual $p_{x}\left(\Theta^{\prime}\right)=2p_{x}^{-}\left(\theta^{\prime}\right)\wedge p_{x}^{+}\left(\theta^{\prime}\right)$
for the point hypothesis that $\theta=\theta^{\prime}.$ 

While the name \emph{p-value function }used by \citet{RefWorks:60}
has become standard in the scientific literature, \emph{significance
function} is also used in higher-order asymptotics (e.g., \citet{RefWorks:437}).
\citet{Efron19933}, \citet{RefWorks:127}, and \citet{RefWorks:1037}
prefer the term \emph{confidence distribution}, avoided here to clearly
distinguish the \emph{p}-value function from the confidence measure
as a Kolmogorov probability distribution. (Whereas any \emph{p}-value
function is isomorphic to a unique confidence measure as defined in
Section \ref{sub:Confidence-measures-etc}, the \emph{p}-value function
can also be isomorphic to an incomplete probability measure. \citet{Wilkinson1977}
constructed a theory of incoherence based on such a measure, underscoring
the need to sharply distinguish confidence measures from \emph{p}-value
functions.)

By the usual concept of statistical power, the \emph{Type II error
rate }of $p^{\pm}$ associated with testing the false null hypothesis
that $\theta=\theta^{\prime}$ at significance level $\alpha$ is
$\beta^{\pm}\left(\alpha,\theta,\theta^{\prime}\right)=P_{\xi}\left(p_{X}^{\pm}\left(\theta^{\prime}\right)>\alpha\right)$
for any $\theta\gtrless\theta^{\prime}$.  For all $\alpha_{1},\alpha_{2}\in\left[0,1\right]$
such that $\alpha_{1}+\alpha_{2}<1$, \[
P_{\xi}\left(\alpha_{1}<p_{X}^{+}\left(\theta\right)<1-\alpha_{2}\right)=1-\alpha_{1}-\alpha_{2},\]
implying that $\theta_{X}^{+}:\left[0,1\right]\rightarrow\Theta,$
the inverse function of $p_{X}^{+},$ yields $\left(\theta_{X}^{+}\left(\alpha_{1}\right),\theta_{X}^{+}\left(1-\alpha_{2}\right)\right)$
as an exact $100\left(1-\alpha_{1}-\alpha_{2}\right)\%$ confidence
interval \citep{RefWorks:60,Efron19933,RefWorks:127,RefWorks:1037}.
\begin{rem}
\label{rem:asymptotics}In many applications, approximate \emph{p}-value
functions replace those that exactly satisfy the definition. For instance,
\citet{RefWorks:127} use a half-corrected \emph{p}-value function
like $p_{C,x}$ of Example \ref{exa:binomial} for discrete data.
Other approximations involve parameter distributions with asymptotically
correct frequentist coverage, including the asymptotic \emph{p}-value
functions of \citet{RefWorks:130}, the distributions of asymptotic
generalized pivotal quantities of \citet{RefWorks:1092}, some of
the generalized fiducial distributions of \citet{RefWorks:1175},
and the Bayesian posteriors of Section \ref{sub:Background}. As with
frequentist inference in general, asymptotics provide approximations
that in many applications prove sufficiently accurate for inference
in the absence of exact results \citep{RefWorks:1250}. 
\end{rem}

\subsubsection{Interpretations of the \emph{p}-value function}

In its history, the \emph{p}-value function has had Neymanian, Fisherian,
and Bayesian interpretations. Consistently viewing the \emph{p}-value
function within the Neyman-Pearson framework rather than as the CDF
of a probability measure of $\theta,$ \citet{RefWorks:60}, \citet{RefWorks:127},
\citet{RefWorks:130}, and \citet{RefWorks:1037} have used $p^{+}$
to concisely present information about hypothesis tests and confidence
intervals in data analysis results. The \emph{p}-value function thus
interpreted as a warehouse of results of potential hypothesis tests
and confidence intervals has also uncovered relationships with the
Bayesian and fiducial frameworks \citep{RefWorks:127}. \citet{RefWorks:127}
aimed {}``to demonstrate the power of the frequentist methodology''
by means of reporting on the \emph{p}-value function and likelihood
function as key components of a unified Neyman-Pearson alternative
to Bayesian posterior distributions, which can fail to yield interval
estimates guaranteed to cover true parameter value at some given rate.
Interestingly, the incipient \emph{p}-value function had been originally
conceived as a Fisherian alternative to what was seen as a mechanical
use of the Neyman-Pearson confidence interval \citep{conditionalityPrinciple1958}.

In a move away from both of the main frequentist interpretations of
the \emph{p}-value function, \citet{Efron19933} proposed a simple,
fast algorithm for computing an \emph{implied prior density} and an
\emph{implied likelihood} from a confidence density assumed to be
proportional to a Bayesian posterior density. He reported that with
a confidence density based on an exponential model and the ABC confidence
interval method, the disagreement between the implied likelihood and
the true likelihood observed by \citet{RefWorks:1159} {}``is small
in most cases,'' with the implication that the confidence density
approximates a Bayesian posterior, thereby establishing approximate
coherence. However, while compatibility with a Bayesian posterior
is sufficient for coherence, it is by no means necessary (§§\ref{sub:Coherence},
\ref{sub:The-Bayesian-framework}).

Dropping the requirement of approximating a Bayesian posterior enables
more exact frequentist coverage in many instances without sacrificing
the coherence achieved by \citet{Efron19933}. The concept of coherence
is itself sufficient to recast the \emph{p}-value function from a
pure Neyman-Pearson toolbox into a versatile weapon for statistical
inference and decision making, enabling all of the applications available
to a Bayesian posterior distribution of the interest parameter, marginal
over any nuisance parameters \citep[cf. ][]{RefWorks:193}. 

In addition, information in the form of a subjective prior distribution
can be incorporated into frequentist data analysis by combining the
prior with the \emph{p}-value function \citep{RefWorks:24} under
the following circumstances. Suppose Agents A and B each bases the
posterior probability measure by which it makes decisions (§\ref{sub:Decision-degenerate})
on confidence sets according to the framework of Section \ref{sub:Coherence}
whenever the observation that $X=x$ constitutes the only information
about the parameter of interest. Agent A observes $x,$ which would
yield the confidence measure $P^{x}$ on $\left(\Theta,\mathcal{A}\right),$
but it also has independent information in the form of $Q,$ a probability
measure on $\left(\Theta,\mathcal{A}\right)$ elicited from Agent
B, where $\Theta\subseteq\mathbb{R}^{1}.$ Since Agent B would have
set $Q$ to equal a confidence measure if possible, Agent A processes
$Q$ exactly as it would a confidence measure computed on the basis
of data independent of $X.$ Since each of several methods of combining
\emph{p}-value functions from independent data sets yields an approximate
\emph{p}-value function incorporating information from both data sets
\citep{RefWorks:130}, Agent A bases its decisions on $P^{x}\oplus Q,$
the probability measure of the CDF obtained by applying any such combination
method to the CDFs of $P^{x}$ and $Q.$ It follows that if $Q$ is
in fact a confidence measure, then $P^{x}\oplus Q$ is a confidence
measure to the same degree of approximation as the combined CDF is
a \emph{p}-value function. Agents A and B may actually be the same
agent, which would be the case if Agent A had computed the prior $Q$
as a confidence measure on the basis of independent data that are
no longer available. In conclusion, the presence of important information
in the form of a prior probability distribution on $\left(\Theta,\mathcal{A}\right)$
does not in itself necessitate moving from confidence-based statistics
to Bayesian statistics.

\subsubsection{\label{sub:Confidence-vs-p}Confidence levels versus \emph{p}-values}

Although both confidence levels and \emph{p}-values can be computed
from the same \emph{p}-value function, the following examples illustrate
how they can lead to different inferences and decisions. Section \ref{sub:Consistency-of-support}
then demonstrates that the former but not the latter are consistent
as estimators of composite hypothesis truth. 
\begin{example}[point null hypothesis]
\label{exa:continuous}If $P^{x}\left(\vartheta<\bullet\right)$
is continuous on $\Theta$, then $P^{x}\left(\theta=\theta^{\prime}\right)=0$
for any interior point $\theta^{\prime}$ of $\Theta$. This means
that given any alternative hypothesis $\theta\in\Theta^{\prime}$
such that $P^{x}\left(\theta\in\Theta^{\prime}\right)>0$, betting
on $\theta=\theta^{\prime}$ versus $\theta\in\Theta^{\prime}$ at
any finite betting odds will result in expected loss, reflecting the
absence of information singling out the point $\theta=\theta^{\prime}$
as a viable possibility before the data were observed. (By contrast,
the usual two-sided \emph{p}-value is numerically equal to $p_{x}\left(\theta^{\prime}\right),$
which does not necessarily equal the probability of any hypothesis
of interest.) If, on the other hand, the parameter value can equal
the null hypothesis value for all practical purposes, that fact may
be represented by modeling the parameter of interest as a random effect
with nonzero probability at the null hypothesis value. The latter
option would define the confidence measure such that its CDF is a
predictive \emph{p}-value function such as that used by \citet{RefWorks:1091}.
\end{example}
~
\begin{example}[bioequivalence]
\label{exa:equivalence}Regulatory agencies often need an estimate
of $1_{\left[\theta^{\prime}-\Delta,\theta^{\prime}+\Delta\right]}\left(\theta\right),$
the indicator of whether the hypothesis that the continuous parameter
of interest lies within $\Delta$ of $\theta^{\prime}$ for some $\Delta>0;$
a value common in bioequivalence studies is $\Delta=\log\left(125\%\right)$
with $\exp\left(\theta^{\prime}\right)$ as the efficacy of a medical
treatment. For the purpose of deciding whether to approve a new treatment
or a genetically modified crop, estimates provided by companies with
obvious conflicts of interest must be as objective as possible. The
Neyman-Pearson framework in effect enables conservative tests of the
null hypotheses $\theta\in\left[\theta^{\prime}-\Delta,\theta^{\prime}+\Delta\right]$,
$\theta<\theta^{\prime}-\Delta$, and $\theta>\theta^{\prime}+\Delta$
\citep{RefWorks:438} but without guidance on how to use the resulting
\emph{p}-values $p_{x}\left(\theta^{\prime}\right),$ $p_{x}^{+}\left(\theta^{\prime}-\Delta\right),$
and $p_{x}^{-}\left(\theta^{\prime}+\Delta\right)$ to make coherent
decisions, which would instead require estimates of $1_{\left(-\infty,\theta^{\prime}-\Delta\right)}\left(\theta\right),$
$1_{\left[\theta^{\prime}-\Delta,\theta^{\prime}+\Delta\right]}\left(\theta\right),$
and $1_{\left(\theta^{\prime}+\Delta,\infty\right)}\left(\theta\right)$
such that the sum of the estimates is 1. The probabilities $P^{x}\left(\vartheta<\theta^{\prime}-\Delta\right),$
$P^{x}\left(\theta^{\prime}-\Delta\le\vartheta\le\theta^{\prime}+\Delta\right),$
and $P^{x}\left(\vartheta>\theta^{\prime}+\Delta\right)$ qualify
as such estimates without suffering from the subjective or arbitrary
nature of assigning a prior distribution. Due to the coherence of
probabilistic indicator estimators, regulators may simultaneously
consider more complex estimates such as $P^{x}\left(\vartheta>\theta^{\prime}+\Delta|\vartheta\notin\left[\theta^{\prime}-\Delta,\theta^{\prime}+\Delta\right]\right)$,
the probability that the effect size is high given that it is non-negligible,
without the multiplicity concerns that plague Neymanian statistics
(§\ref{sub:Likelihood}). \citet{RefWorks:1037} also compared the
use of observed confidence levels to conventional methods of bioequivalence.
\end{example}
~

\subsubsection{\label{sub:Consistency-of-support}Consistency of hypothesis confidence}

More terminology will be introduced to establish a sense in which
the confidence value but not the \emph{p}-value consistently estimates
the hypothesis indicator.
\begin{defn}
\label{def:consistency}An indicator estimator $\hat{1}$ is \emph{consistent}
if, for all $\Theta^{\prime}\in\mathcal{A},$ \[
\hat{1}_{\Theta^{\prime}}\left(X\right)\xrightarrow{P_{\theta,\gamma}}1_{\Theta^{\prime}}\left(\theta\right)\]
for every $\gamma\in\Gamma$ and for every $\theta$ that is an element
of $\Theta$ but not of the boundary of $\Theta^{\prime}.$ 
\end{defn}
By the usual concept of statistical power, the \emph{Type II error
rate }of $p^{\pm}$ associated with testing the false null hypothesis
that $\theta=\theta^{\prime}$ at significance level $\alpha$ is
$\beta^{\pm}\left(\alpha,\theta,\theta^{\prime}\right)=P_{\theta,\gamma}\left(p_{X}^{\pm}\left(\theta^{\prime}\right)>\alpha\right)$
for any $\theta\gtrless\theta^{\prime}$. Commonly used in two-sided
testing, the \emph{two-sided p}-value of the null hypothesis that
$\theta\in\Theta^{\prime}$ is  for all $\Theta^{\prime}\subseteq\Theta$
and $x\in\Omega$. 

The next two propositions contrast the consistency of the confidence
value with the inconsistency of the two-sided \emph{p}-value. 
\begin{prop}
\label{thm:congruity-is-consistent}Assume all one-sided tests represented
by the \emph{p}-value functions $p^{\pm}$ are asymptotically powerful
in the sense that $\lim_{n\rightarrow\infty}\beta^{\pm}\left(\alpha,\theta,\theta^{\prime}\right)=0$
for all $\alpha\in\left(0,1\right)$ and for all $\theta,\theta^{\prime}\in\Theta$
such that $\theta\gtrless\theta^{\prime}.$ The function $\hat{1}:\mathcal{A}\times\Omega\rightarrow\left[0,1\right]$
is a consistent indicator estimator if $P^{x}=\hat{1}_{\bullet}\left(x\right)$
is a confidence measure corresponding to $p^{\pm}$ given $X=x$ for
all $x\in\Omega.$\end{prop}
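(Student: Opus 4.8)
The plan is to reduce the whole question, as the sample size $n$ grows, to the behaviour of the random CDF $p^{+}_X(\cdot)$, using the fact recorded in Section~\ref{sub:Cumulative-confidence} that ``$P^x$ is a confidence measure corresponding to $p^{\pm}$'' means precisely that the CDF of $P^x$ is the upper-tail $p$-value function, i.e.\ $P^x(\vartheta\le s)=p^{+}_x(s)$ for every $s\in\Theta$ and $x\in\Omega$; I would state this equivalence at the start, citing the isomorphism between $p$-value functions and confidence measures. The first step is the lemma that, for each fixed $s\in\Theta$ with $s\ne\theta$ (the true value),
\[
p^{+}_X(s)\xrightarrow{P_{\theta,\gamma}}1_{(\theta,\infty)}(s),
\]
that is, $p^{+}_X(s)\to 1$ in $P_{\theta,\gamma}$-probability when $s>\theta$ and $p^{+}_X(s)\to 0$ in $P_{\theta,\gamma}$-probability when $s<\theta$. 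This follows directly from the asymptotic-power hypothesis together with $p^{-}_x=1-p^{+}_x$: if $s<\theta$ then, reading $\beta^{+}$ as the Type~II error of the upper-tail test (the one relevant when the true value exceeds the hypothesized one), $P_{\theta,\gamma}(p^{+}_X(s)>\alpha)=\beta^{+}(\alpha,\theta,s)\to 0$ for every $\alpha\in(0,1)$, and since $p^{+}\le 1$ this says $p^{+}_X(s)\xrightarrow{P_{\theta,\gamma}}0$; if $s>\theta$ then $P_{\theta,\gamma}(p^{+}_X(s)<1-\alpha)=P_{\theta,\gamma}(p^{-}_X(s)>\alpha)=\beta^{-}(\alpha,\theta,s)\to 0$ for every $\alpha\in(0,1)$, which says $p^{+}_X(s)\xrightarrow{P_{\theta,\gamma}}1$. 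Informally, the confidence CDF collapses in sample-space probability to the Heaviside step at the true $\theta$.

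The second step promotes this pointwise statement to an arbitrary event $\Theta'\in\mathcal{A}$ by a Portmanteau-type argument carried along in probability. Fixing $\theta\in\Theta$ not on the boundary of $\Theta'$, one has either $\theta\in\operatorname{int}\Theta'$ or $\theta\in\operatorname{ext}\Theta'$. In the first case, choose $\epsilon>0$ small enough that $[\theta-\epsilon,\theta+\epsilon]\subseteq\Theta$ and $(\theta-\epsilon,\theta+\epsilon)\subseteq\Theta'$; then, by monotonicity and additivity of $P^x$,
\[
1\ge P^x(\Theta')\ge P^x(\vartheta\le\theta+\epsilon/2)-P^x(\vartheta\le\theta-\epsilon)=p^{+}_x(\theta+\epsilon/2)-p^{+}_x(\theta-\epsilon),
\]
and by the lemma the right side tends in $P_{\theta,\gamma}$-probability to $1-0=1$ (since $\theta+\epsilon/2>\theta>\theta-\epsilon$), so $P^x(\Theta')\xrightarrow{P_{\theta,\gamma}}1=1_{\Theta'}(\theta)$. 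In the second case, choose $\epsilon>0$ with $[\theta-\epsilon,\theta+\epsilon]\subseteq\Theta$ and $(\theta-\epsilon,\theta+\epsilon)\cap\Theta'=\emptyset$; then, by monotonicity and subadditivity of $P^x$,
\[
0\le P^x(\Theta')\le P^x(\vartheta\le\theta-\epsilon)+P^x(\vartheta\ge\theta+\epsilon)\le p^{+}_x(\theta-\epsilon)+(1-p^{+}_x(\theta+\epsilon/2)),
\]
whose right side tends in $P_{\theta,\gamma}$-probability to $0+(1-1)=0$, so $P^x(\Theta')\xrightarrow{P_{\theta,\gamma}}0=1_{\Theta'}(\theta)$. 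Both cases use only the elementary facts that sums and differences of sequences converging in probability converge to the corresponding sum or difference, and that a sequence squeezed between two others converging in probability to a common constant also converges in probability to that constant.

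No sharp estimate is needed; the work is in the book-keeping at two junctures. The first is correctly coupling the asymptotic-power hypothesis ($\beta^{+}$ with the true value above the hypothesized one, $\beta^{-}$ with the reverse) and converting ``$\beta^{\pm}(\alpha,\cdot,\cdot)\to 0$ for every $\alpha\in(0,1)$'' into a genuine convergence in probability of $p^{+}_X(s)$ to the correct endpoint --- the place where $p^{-}_x=1-p^{+}_x$ and the freedom to vary $\alpha$ are used. The second is the passage from half-lines to a general Borel $\Theta'$, which needs nothing beyond the topological trichotomy $\mathbb{R}=\operatorname{int}\Theta'\sqcup\partial\Theta'\sqcup\operatorname{ext}\Theta'$ and monotonicity of $P^x$, but where a little care with strict versus non-strict inequalities is warranted; I handle this by inserting the auxiliary point $\theta+\epsilon/2$ (so that no continuity point of $p^{+}_x$ need be located) and by shrinking $\epsilon$ enough to keep the evaluated points inside $\Theta$. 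I expect this second juncture --- making the measure-theoretic passage watertight for every Borel set, not merely intervals --- to be the part that takes the most care, though it remains routine.
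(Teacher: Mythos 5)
Your proof is correct, and its first half --- establishing that $p^{+}_{X}(s)$ collapses in $P_{\theta,\gamma}$-probability to the step function $1_{(\theta,\infty)}(s)$ for $s\neq\theta$, via the careful pairing of $\beta^{+}$ with $s<\theta$ and $\beta^{-}$ (through $p^{-}_{x}=1-p^{+}_{x}$) with $s>\theta$ --- is exactly the engine of the paper's proof as well. Where you genuinely diverge is in the passage to a general event $\Theta^{\prime}$: the paper decomposes $\Theta^{\prime}$ into a family $\mathcal{A}^{\prime}$ of disjoint open intervals, computes the limit $P^{X}(\Theta^{\prime\prime\prime})\to 1_{\Theta^{\prime\prime\prime}}(\theta)$ term by term, and then sums over $\mathcal{A}^{\prime}$, whereas you localize at the true $\theta$ and squeeze $P^{x}(\Theta^{\prime})$ between bounds built from a small interval $(\theta-\epsilon,\theta+\epsilon)$ contained in $\operatorname{int}\Theta^{\prime}$ or in its exterior. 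Your route buys two things the paper's does not: it never has to interchange a limit in probability with a possibly countably infinite sum (a step the paper performs without justification), and it applies to an arbitrary Borel $\Theta^{\prime}$ rather than only to sets expressible as countable disjoint unions of open intervals; the paper's decomposition, by contrast, yields the explicit limiting contribution of each constituent interval. The only loose end in your version is the requirement $[\theta-\epsilon,\theta+\epsilon]\subseteq\Theta$, which fails when $\theta$ is an endpoint of the parameter space and should be handled by using only the one-sided bound that remains inside $\Theta$; this is a minor edge case that the paper's proof does not address either.
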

\begin{proof}
By the definition of the boundary of a set $\Theta^{\prime}$ as the
difference between its closure $\bar{\Theta^{\prime}}$ and its interior
$\interior\Theta^{\prime}$, the theorem asserts that, for all $\Theta^{\prime}\in\mathcal{A},$
$\theta$ is either in $\interior\Theta^{\prime},$ in which case
the theorem asserts $P^{X}\left(\Theta^{\prime}\right)\xrightarrow{P_{\theta,\gamma}}1,$
or $\theta$ is in $\Theta\backslash\Theta^{\prime},$ in which case
the theorem asserts $P^{X}\left(\Theta^{\prime}\right)\xrightarrow{P_{\theta,\gamma}}0.$
Let $\mathcal{A}^{\prime}$ represent the set of all disjoint open
Each term of the sum expands as

\begin{eqnarray*}
P^{X}\left(\Theta^{\prime\prime\prime}\right) & = & P^{X}\left(\left(\inf\Theta^{\prime\prime\prime},\sup\Theta^{\prime\prime\prime}\right)\right)=p_{X}^{+}\left(\sup\Theta^{\prime\prime\prime}\right)-p_{X}^{+}\left(\inf\Theta^{\prime\prime\prime}\right)\\
 & = & p_{X}^{-}\left(\inf\Theta^{\prime\prime\prime}\right)-p_{X}^{-}\left(\sup\Theta^{\prime\prime\prime}\right)\\
 & = & 1-p_{X}^{-}\left(\sup\Theta^{\prime\prime\prime}\right)-p_{X}^{+}\left(\inf\Theta^{\prime\prime\prime}\right).\end{eqnarray*}
As the \emph{p}-value functions are asymptotically powerful, $p_{X}^{\pm}\left(\theta^{\prime}\right)\xrightarrow{P_{\theta,\gamma}}0$
for all $\alpha\in\left(0,1\right)$ and for all $\theta,\theta^{\prime}\in\Theta$
such that $\theta\gtrless\theta^{\prime},$ with the result that each
term may be written as a function of \emph{p}-values that converge
in $P_{\theta,\gamma}$ to 0:\begin{eqnarray*}
P^{X}\left(\Theta^{\prime\prime\prime}\right) & = & \begin{cases}
p_{X}^{-}\left(\inf\Theta^{\prime\prime\prime}\right)-p_{X}^{-}\left(\sup\Theta^{\prime\prime\prime}\right) & \theta<\inf\Theta^{\prime\prime\prime}\\
1-p_{X}^{-}\left(\sup\Theta^{\prime\prime\prime}\right)-p_{X}^{+}\left(\inf\Theta^{\prime\prime\prime}\right) & \theta\in\Theta^{\prime\prime\prime}\\
p_{X}^{+}\left(\sup\Theta^{\prime\prime\prime}\right)-p_{X}^{+}\left(\inf\Theta^{\prime\prime\prime}\right) & \theta>\sup\Theta^{\prime\prime\prime}\end{cases}\\
 & \xrightarrow{P_{\theta,\gamma}} & \begin{cases}
0-0 & \theta<\inf\Theta^{\prime\prime\prime}\\
1-0-0 & \theta\in\Theta^{\prime\prime\prime}\\
0-0 & \theta>\sup\Theta^{\prime\prime\prime}\end{cases}\end{eqnarray*}
for all $\Theta^{\prime\prime\prime}\in\mathcal{A}^{\prime}$. Summing
the terms over $\mathcal{A}^{\prime}$ yields \[
P^{X}\left(\Theta^{\prime}\right)\xrightarrow{P_{\theta,\gamma}}\sum_{\Theta^{\prime\prime\prime}\in\mathcal{A}^{\prime}}1_{\Theta^{\prime\prime\prime}}\left(\theta\right)=1_{\Theta^{\prime}}\left(\theta\right)\]
since $\theta\in\interior\Theta^{\prime}$ implies that $\theta$
is in one element of $\mathcal{A}^{\prime}$.\end{proof}
\begin{rem}
\citet[pp. 37-38]{Polansky2007b} proved a similar proposition of
consistency given a smooth distribution $P_{\theta,\gamma}.$ A suitably
transformed likelihood ratio test statistic is also a consistent indicator
estimator under the standard regularity conditions \citep{RefWorks:435}.\end{rem}
\begin{prop}
Under the conditions of Theorem \ref{thm:congruity-is-consistent},
the two-sided \emph{p}-value $p_{X}\left(\Theta^{\prime}\right)$
is not a consistent indicator estimator. \end{prop}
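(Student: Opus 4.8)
The plan is to establish inconsistency by exhibiting a single hypothesis $\Theta'$ and a parameter value $\theta$ not on its boundary for which the two-sided $p$-value $p_X\left(\Theta'\right)$ fails to converge in $P_{\theta,\gamma}$ to $1_{\Theta'}\left(\theta\right)$. The natural candidate is a point hypothesis, say $\Theta' = \left\{\theta'\right\}$, or more robustly an interval with $\theta$ strictly in its interior, since the definition of the two-sided $p$-value as $p_x\left(\Theta'\right) = 2\sup_{\theta'\in\Theta'} p_x^-\left(\theta'\right)\wedge p_x^+\left(\theta'\right)$ will keep it bounded away from $1$ even when $\theta$ is an interior point. First I would fix attention on the case $\theta \in \interior \Theta'$, where consistency would demand $p_X\left(\Theta'\right)\xrightarrow{P_{\theta,\gamma}} 1$, and show this limit does not hold.

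Next I would analyze the distribution of $p_X^{\pm}\left(\theta\right)$ at the true parameter value itself. By equation (\ref{eq:uniform}), $p_X^+\left(\theta\right)$ is exactly uniform on $\left[0,1\right]$ under $P_{\theta,\gamma}$, hence so is $p_X^-\left(\theta\right) = 1 - p_X^+\left(\theta\right)$, and therefore $p_X^-\left(\theta\right)\wedge p_X^+\left(\theta\right)$ has the distribution of $\min\left(U,1-U\right)$ for $U$ uniform, which is uniform on $\left[0,1/2\right]$. Thus $2\,p_X^-\left(\theta\right)\wedge p_X^+\left(\theta\right)$ is uniform on $\left[0,1\right]$ and does not converge to anything as $n\to\infty$; in particular it does not converge to $1$. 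Since $\theta \in \Theta'$, the supremum over $\theta'\in\Theta'$ satisfies $\sup_{\theta'\in\Theta'} p_x^-\left(\theta'\right)\wedge p_x^+\left(\theta'\right) \ge p_x^-\left(\theta\right)\wedge p_x^+\left(\theta\right)$, so $p_X\left(\Theta'\right)\ge 2\,p_X^-\left(\theta\right)\wedge p_X^+\left(\theta\right)$, which gives a lower bound but not directly the failure to reach $1$. To get failure of convergence to $1$, I would instead pick $\theta$ to be an interior point of $\Theta'$ but still produce a point $\theta^*$ near a boundary of $\Theta'$ (or simply use a boundary-approaching sequence); alternatively, and more cleanly, I would argue: for the convergence $p_X\left(\Theta'\right)\to 1$ to hold we would need $\sup_{\theta'\in\Theta'} p_x^-\left(\theta'\right)\wedge p_x^+\left(\theta'\right) \to 1/2$ in probability, i.e. for some $\theta'\in\Theta'$ we have $p_x^+\left(\theta'\right)$ near $1/2$; but for $\theta'$ in the interior with $\theta' \ne \theta$, asymptotic power forces $p_X^+\left(\theta'\right)$ toward $0$ or $1$, and at $\theta' = \theta$ it is uniform, so the supremum is governed by the uniform term and does not stabilize at $1/2$.

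Then I would assemble the argument: for a concrete choice such as $\Theta'$ a bounded interval with $\theta$ its midpoint, every $\theta' \in \Theta'$ with $\theta' \ne \theta$ has $\theta \gtrless \theta'$, so by the asymptotic-power hypothesis $\lim_n \beta^{\pm}\left(\alpha,\theta,\theta'\right)=0$, giving $p_X^{\pm}\left(\theta'\right)\xrightarrow{P_{\theta,\gamma}} 0$ for the appropriate one-sided version; hence $p_x^-\left(\theta'\right)\wedge p_x^+\left(\theta'\right)\xrightarrow{P_{\theta,\gamma}} 0$ for each fixed $\theta' \ne \theta$. The supremum is therefore asymptotically attained (in probability) near $\theta' = \theta$, where $p_x^-\left(\theta\right)\wedge p_x^+\left(\theta\right)$ is uniform on $\left[0,1/2\right]$. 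Consequently $p_X\left(\Theta'\right)$ is asymptotically no larger than roughly $2\,p_X^-\left(\theta\right)\wedge p_X^+\left(\theta\right)$ up to $o_P(1)$ corrections from the neighborhood of $\theta$, so $p_X\left(\Theta'\right)$ does not converge to $1$; since $\theta \in \interior\Theta' = \Theta\setminus\left(\text{boundary of }\Theta'\right)$, consistency in the sense of Definition \ref{def:consistency} would require exactly that convergence, and it fails.

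The main obstacle is making rigorous the claim that the supremum $\sup_{\theta'\in\Theta'} p_x^-\left(\theta'\right)\wedge p_x^+\left(\theta'\right)$ is asymptotically controlled by the behavior near $\theta' = \theta$ rather than by some $\theta'$ drifting toward the boundary of $\Theta'$ as $n$ grows; the pointwise-in-$\theta'$ convergence from asymptotic power does not immediately give a uniform statement. I expect the cleanest way around this is to avoid the supremum entirely: choose $\Theta'$ to be a \emph{point} null, $\Theta' = \left\{\theta^{\circ}\right\}$ with $\theta = \theta^{\circ}$, so that $p_x\left(\Theta'\right) = 2\,p_x^-\left(\theta^{\circ}\right)\wedge p_x^+\left(\theta^{\circ}\right)$ exactly, which is uniform on $\left[0,1\right]$ for every $n$ by equation (\ref{eq:uniform}); since $\theta = \theta^{\circ}$ lies in $\Theta'$ and not on its boundary (a single point is its own boundary only if one insists on the topological boundary — so one would take instead a small closed interval around $\theta$, for which $\theta$ is interior, and invoke the pointwise power argument above for the other points, together with a uniform-equicontinuity assumption on $p^{\pm}$ or a monotone-in-$\theta'$ structure to control the sup). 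Stating the proposition for the closed-interval case and reducing the sup to the single term at $\theta$ via monotonicity of $p_x^+\left(\cdot\right)$ (it is a CDF in $\theta$) is, I expect, the decisive simplification: on one side of $\theta$, $p_x^+\left(\theta'\right)\wedge p_x^-\left(\theta'\right) = p_x^+\left(\theta'\right)$ is increasing up to $\theta$, and on the other side it is $p_x^-\left(\theta'\right)$, decreasing away from $\theta$, so the supremum over a closed interval containing $\theta$ in its interior is attained at $\theta$, equals $p_x^-\left(\theta\right)\wedge p_x^+\left(\theta\right)$, and the whole quantity $p_X\left(\Theta'\right)$ is uniform on $\left[0,1\right]$ — manifestly not convergent to $1$.
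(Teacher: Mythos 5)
Your overall strategy---reduce the two-sided $p$-value to the single term at the true value, where equation (\ref{eq:uniform}) makes $2\,p_X^{-}\left(\theta\right)\wedge p_X^{+}\left(\theta\right)$ exactly uniform on $\left[0,1\right]$---is close in spirit to the paper's proof, which simply invokes a cited result that the distribution of $p_{X}\left(\Theta^{\prime}\right)$ converges to the uniform distribution on $\left[0,1\right]$ whenever $\theta\in\Theta^{\prime}$ and notes that this contradicts Definition \ref{def:consistency}. But the step by which you carry out the reduction is wrong. The function $\theta^{\prime}\mapsto p_x^{-}\left(\theta^{\prime}\right)\wedge p_x^{+}\left(\theta^{\prime}\right)=\min\left(p_x^{+}\left(\theta^{\prime}\right),1-p_x^{+}\left(\theta^{\prime}\right)\right)$ is maximized not at the true value $\theta$ but at the \emph{median} of the CDF $p_x^{+}$, i.e.\ at the data-dependent point $\hat{\theta}_x$ where $p_x^{+}\left(\hat{\theta}_x\right)=1/2$; at $\theta^{\prime}=\theta$ the value $p_x^{+}\left(\theta\right)$ is uniformly distributed, not equal to $1/2$, so there is no reason for the crossover between the increasing branch $p_x^{+}$ and the decreasing branch $p_x^{-}$ to occur at $\theta$. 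Consequently, for a closed interval $\Theta^{\prime}=\left[a,b\right]$ with $a<\theta<b$, the asymptotic-power hypothesis gives $p_X^{+}\left(a\right)\xrightarrow{P_{\theta,\gamma}}0$ and $p_X^{+}\left(b\right)\xrightarrow{P_{\theta,\gamma}}1$, so with probability tending to one the median $\hat{\theta}_X$ lies in $\left(a,b\right)$, the supremum equals $1/2$ (exactly so when $p_X^{+}\left(\cdot\right)$ is continuous), and $p_X\left(\Theta^{\prime}\right)=1$. Your chosen counterexample therefore converges to $1=1_{\Theta^{\prime}}\left(\theta\right)$---the \emph{consistent} behaviour---and your concluding claim that ``the whole quantity $p_X\left(\Theta^{\prime}\right)$ is uniform on $\left[0,1\right]$'' is false.

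That leaves only the point null $\Theta^{\prime}=\left\{ \theta^{\circ}\right\}$ with $\theta=\theta^{\circ}$, where the two-sided $p$-value genuinely is uniform for every $n$; but, as you yourself observe, a singleton is its own topological boundary, so Definition \ref{def:consistency} exempts precisely this case, and your proposed repair (pass to a small closed interval around $\theta$) collapses for the reason above. So the proposal as written does not establish the proposition. The paper's own one-line proof avoids this difficulty only by outsourcing the limiting-uniformity claim to an external reference rather than deriving it from the sup-based definition of $p_x\left(\Theta^{\prime}\right)$; a self-contained argument would require either identifying correctly where the supremum is attained and exhibiting a hypothesis for which the resulting statistic remains nondegenerate off the boundary, or a different choice of $\Theta^{\prime}$ altogether.
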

\begin{proof}
For any $\theta\in\Theta^{\prime}\in\mathcal{A},$ the distribution
of the two-sided \emph{p}-value $p_{X}\left(\Theta^{\prime}\right)$
converges to the uniform distribution on $\left[0,1\right]$ \citep{RefWorks:1037},
violating consistency (Definition \ref{def:consistency}).
\end{proof}

\section{\label{sec:Discussion}Discussion}

The confidence metameasure $\mathcal{P}^{x}$ and the confidence measure
or frequentist posterior $P^{x}$ bring both coherence and consistency
to frequentist inference and decision making.

The coherence property established in Section \ref{sub:Coherence}
confers the ability to consistently and directly report the levels
of confidence of as many complex hypotheses as desired and to perform
estimation and prediction (§\ref{sub:Decision-degenerate}). Even
though the frequentist posterior $P^{x}$ is a flexible distribution
of possible values of a fixed parameter, it requires no prior; in
fact, $P^{x}$ need not even necessarily correspond to any Bayesian
posterior distribution (§\ref{sub:The-Bayesian-framework}). In conclusion,
the metalevel or level of confidence in a given hypothesis has the
internal coherence of the Bayesian posterior or class of such posteriors
without requiring a prior distribution or even an exact confidence
set estimator. 

More can be said if the parameter of interest is one-dimensional,
in which case the confidence level of a composite hypothesis is consistent
as an estimate of whether that hypothesis is true, whereas neither
the Bayesian posterior probability nor the \emph{p}-value is generally
consistent in that sense (§\ref{sub:Consistency-of-support}). Specifically,
the equality of the confidence level of $\theta\in\Theta^{\prime}$
to the coverage rate of the corresponding confidence set guarantees
convergence in probability to 1 if $\theta$ is in the interior of
$\Theta^{\prime}$ or to 0 if $\theta\notin\Theta^{\prime}$ (Proposition
\ref{thm:congruity-is-consistent}).

\section{Acknowledgments}

Anthony Davison furnished many useful comments on an early version
of the manuscript that led to greater generality and clarity. I also
thank Michael Goldstein for information that fortified the discussion
of coherence in Section \ref{sub:The-Bayesian-framework} and Corey
Yanofsky for providing insightful feedback on a draft of the same
section. This work was partially supported by the Faculty of Medicine
of the University of Ottawa and by Agriculture and Agri-Food Canada.

\bibliographystyle{elsarticle-harv}
\bibliography{refman}

\end{document}